\definecolor{dullmagenta}{RGB}{102,0,102}   
\definecolor{darkred}{RGB}{178,34,34}   
\newcommand{\Argmin}[1]{\mathop{\mbox{argmin}}_{#1}}
\def\CtSteps{T}
\newcommand{\order}{\mathcal{O}}
\newcommand{\BPE}{$(\text{BP}_{\epsilon})$ }
\newcommand{\QPL}{$(\text{QP}_{\lambda})$ }
\newcommand{\LST}{$(\text{LS}_{\tau})$ }
\newcommand{\mmax}[2]{\tiny #1/#2}
\newcommand{\nesta}{NESTA }
\newcommand{\nestansp}{NESTA}
\title{NESTA: A Fast and Accurate First-order Method\\
  for Sparse Recovery}
\author{Stephen Becker, J\'er\^ome Bobin and Emmanuel
J. Cand\`es\thanks{Applied and Computational Mathematics, Caltech,
  Pasadena, CA 91125 ({\tt srbecker, bobin,
    emmanuel@acm.caltech.edu}). This work has been partially
  supported by ONR grants N00014-09-1-0469 and N00014-08-1-0749, by
  a DARPA grant FA8650-08-C-7853, and by the 2006 Waterman Award from
  NSF. Submitted April 16, 2009.}}
\date{\today}
\begin{document}

\maketitle

\begin{abstract}
Accurate signal recovery or image reconstruction from indirect and
possibly undersampled data is a topic of considerable interest; 
for example, the literature in the recent field of compressed
sensing is already quite immense.
Inspired by recent breakthroughs in the development of novel
first-order methods in convex optimization, most notably Nesterov's
smoothing technique, this paper introduces a fast and accurate
algorithm for solving common recovery problems in signal
processing. In the spirit of Nesterov's work,  one of the key
ideas of this algorithm is
a subtle averaging of sequences of
iterates, which has been shown to improve the convergence properties
of standard gradient-descent algorithms. This paper demonstrates
that this approach is ideally suited for solving large-scale
compressed sensing reconstruction problems as 1) it is
computationally efficient, 2) it is accurate and returns solutions
with several correct digits, 3) it is flexible and amenable to many
kinds of reconstruction problems, and 4) it is robust in the sense
that its excellent performance across a wide range of problems does
not depend on the fine tuning of several parameters.  Comprehensive
numerical experiments on realistic signals exhibiting a large
dynamic range show that this algorithm compares favorably with
recently proposed state-of-the-art methods. We also apply the
algorithm to solve other problems for which there are fewer
alternatives, such as total-variation minimization, and convex
programs seeking to minimize the $\ell_1$ norm of $Wx$ under
constraints, in which $W$ is not diagonal.
\end{abstract}

\begin{keywords} 
Nesterov's method, smooth approximations of nonsmooth functions,
$\ell_1$ minimization, duality in convex optimization, continuation
methods, compressed sensing, total-variation minimization.
\end{keywords}


\pagestyle{myheadings}
\thispagestyle{plain}


\section{Introduction}\label{sec:intro}

Compressed sensing (CS) \cite{CRT:cs,CT:cs3,don:cs} is a novel
sampling theory, which is based on the revelation that one can exploit
sparsity or compressibility when acquiring signals of general
interest. In a nutshell, compressed sensing designs nonadaptive
sampling techniques that condense the information in a compressible
signal into a small amount of data. There are some indications that
because of the significant reduction in the number of measurements
needed to recover a signal accurately, engineers are changing the way
they think about signal acquisition in areas ranging from
analog-to-digital conversion \cite{darpa}, digital optics, magnetic
resonance imaging \cite{lustig07}, seismics \cite{LHerm07} and
astronomy \cite{CS_Bobin}.

In this field, a signal $x^0 \in \mathbb{R}^n$ is acquired by collecting
data of the form 
\[
b = Ax^0 + z,
\]
where $x^0$ is the signal of interest (or its coefficient
sequence in a representation where it is assumed to be fairly sparse),
$A$ is a known $m \times n$ ``sampling'' matrix, and $z$ is a noise
term. In compressed sensing and elsewhere, a standard approach
attempts to reconstruct $x^0$ by solving
\begin{equation}\label{eq:cs}
\begin{array}{ll}
  \text{minimize}   & \quad f(x)\\
  \text{subject to} & \quad  \|b - Ax\|_{\ell_2} \le \epsilon,
\end{array}
\end{equation}
where $\epsilon^2$ is an estimated upper bound on the noise power.
The choice of the regularizing function $f$ depends on prior
assumptions about the signal $x^0$ of interest: if $x^0$ is
(approximately) sparse, an appropriate convex function is the $\ell_1$
norm (as advocated by the CS theory); if $x^0$ is a piecewise constant
object, the total-variation norm provides accurate recovery results,
and so on.

Solving large-scale problems such as \eqref{eq:cs} (think of $x^0$ as
having millions of entries as in mega-pixel images) is
challenging. Although one cannot review the vast literature on this
subject, the majority of the algorithms that have been proposed are
unable to solve these problems {\em accurately} with low computational
complexity. On the one hand, standard second-order methods such as
interior-point methods \cite{l1magic,l1ls,pdco} are accurate but
problematic for they need to solve large systems of linear equations
to compute the Newton steps. 
On the other hand, inspired by iterative
thresholding ideas \cite{daubechies:iter,Rest:Nowak,CombettesWajs05},
we have now available a great number of first-order methods, see
\cite{ist:fnw,COS:XXX:08,FPC,FPC2} and the many earlier references
therein, which may be faster but not necessarily accurate. Indeed,
these methods are shown to converge slowly, and typically need a very
large number of iterations when high accuracy is required.  

We would like to pause on the demand for high accuracy since this is
the main motivation of the present paper. While in some applications,
one may be content with one or two digits of accuracy, there are
situations in which this is simply unacceptable. Imagine that the
matrix $A$ models a device giving information about the signal $x^0$,
such as an analog-to-digital converter, for example. Here, the ability
to detect and recover low-power signals that are barely above the
noise floor, and possibly further obscured by large interferers, is
critical to many applications. In mathematical terms, one could have a
superposition of high power signals corresponding to components
$x^0[i]$ of $x^0$ with magnitude of order 1, and low power signals
with amplitudes as far as 100 dB down, corresponding to components
with magnitude about $10^{-5}$. In this regime of high-dynamic range,
very high accuracy is required. In the example above, one would need
at least five digits of precision as otherwise, the low power signals
would go undetected.

Another motivation is solving \eqref{eq:cs} accurately when the signal
$x^0$ is not exactly sparse, but rather approximately sparse, as in the
case of real-world compressible signals.  Since exactly sparse signals
are rarely found in applications---while compressible signals are
ubiquitous---it is important to have an accurate first-order method
to handle realistic signals.

\subsection{Contributions}

A few years ago, Nesterov \cite{SMSN_Nesterov} published a seminal
paper which couples smoothing techniques (see \cite{NemiBook} and the
references therein) with an improved gradient method to derive
first-order methods which achieve a convergence rate he had proved to
be optimal \cite{Neste83} two decades earlier. As a consequence of
this breakthrough, a few recent works have followed up with improved
techniques for some very special problems in signal or image
processing, see \cite{BeckT08,Dahl08,Weiss08,aujol} for example, or
for minimizing composite functions such as $\ell_1$-regularized
least-squares problems \cite{NestCompo}.  In truth, these novel
algorithms demonstrate great promise; they are fast, accurate and
robust in the sense that their performance does not depend on the fine
tuning of various controlling parameters.

This paper also builds upon Nesterov's work by extending some of his 
works discussed just above, and proposes an algorithm---or, better
said, a class of algorithms---for solving recovery problems from
incomplete measurements. We refer to this algorithm as
\nestansp---a shorthand for Nesterov's algorithm---to acknowledge the fact
that it is based on his method. The main purpose and the
contribution of this paper consist in showing that \nesta obeys the
following desirable properties.
\begin{remunerate}
\item {\em Speed:} \nesta is an iterative algorithm where each
iteration is decomposed into three steps, each involving only a few
matrix-vector operations when $A^*A$ is an orthogonal projector and,
more generally, when the eigenvalues of $A^*A$ are well
clustered. This, together with the accelerated convergence rate of
Nesterov's algorithm \cite{SMSN_Nesterov,BeckT08}, makes \nesta a
method of choice for solving large-scale problems. Furthermore,
\nestansp's convergence is mainly driven by a single smoothing
parameter $\mu$ introduced in Section~\ref{sec:nesterov}. One can
use continuation techniques \cite{FPC,FPC2} to dynamically update
this parameter to substantially accelerate this algorithm.

\item {\em Accuracy:} \nesta depends on a few parameters that can be
set in a very natural fashion. In fact, there is a trivial
relationship between the value of these parameters and the desired
accuracy. Furthermore, our numerical experiments demonstrate that
\nesta can find the first 4 or 5 significant digits of the optimal
solution to \eqref{eq:cs}, where $f(x)$ is the $\ell_1$ norm or
the total-variation norm of $x$, in a few hundred iterations.  This
makes \nesta amenable to solve recovery problems involving signals
of very large sizes that also exhibit a great dynamic range.

\item{\em Flexibility:} \nesta can be adapted to solve many problems
beyond $\ell_1$ minimization with the same efficiency, such as
total-variation (TV) minimization problems. In this paper, we will
also discuss applications in which $f$ in \eqref{eq:cs} is given by
$f(x) = \|W x\|_{\ell_1}$, where one may think of $W$ as a
short-time Fourier transform also known as the Gabor transform, a
curvelet transform, an undecimated wavelet transform and so on, or a
combination of these, or a general arbitrary dictionary of
waveforms (note that this class of recovery problems also include
weighted $\ell_1$ methods \cite{Candes:2008le}). This is
particularly interesting because recent work \cite{EladPrior}
suggests the potential advantage of this analysis-based approach over the classical
{\em basis pursuit} in solving important inverse problems
\cite{EladPrior}.
\end{remunerate}

A consequence of these properties is that \nestansp, and more generally
Nesterov's method, may be of interest to researchers working in the
broad area of signal recovery from indirect and/or undersampled data.

Another contribution of this paper is that it also features a fairly
wide range of numerical experiments comparing various methods against
problems involving realistic and challenging data. By challenging, we
mean problems of very large scale where the unknown solution exhibits
a large dynamic range; that is, problems for which classical
second-order methods are too slow, and for which standard first-order
methods do not provide sufficient accuracy. More specifically,
Section~\ref{sec:numeric} presents a comprehensive series of numerical
experiments which illustrate the behavior of several state-of-the-art
methods including interior point methods \cite{l1ls}, projected
gradient techniques \cite{FPC,SPGL,ist:fnw}, fixed point continuation
and iterative thresholding algorithms \cite{FPC,YinCS08,BeckT08}. It
is important to consider that most of these methods have been
perfected after several years of research \cite{l1ls,ist:fnw}, and did
not exist two years ago. For example, the Fixed Point Continuation
method with Active Set \cite{FPC2}, which represents a notable
improvement over existing ideas, was released while we were working on
this paper.

\subsection{Organization of the paper and notations}

As emphasized earlier, \nesta is based on Nesterov's ideas and
Section~\ref{sec:nesterov} gives a brief but essential description of
Nesterov's algorithmic framework. The proposed algorithm is introduced
in Section~\ref{sec:nesta}. Inspired by continuation-like schemes, an
accelerated version of \nesta is described in
Section~\ref{sec:conti}. We report on extensive and comparative
numerical experiments in
Section~\ref{sec:numeric}. Section~\ref{sec:flexible} covers
extensions of \nesta to minimize the $\ell_1$ norm of $Wx$ under data
constraints (Section~\ref{sec:analysis}), and includes realistic
simulations in the field of radar pulse detection and estimation.
Section \ref{sec:tvmin} extends \nesta to solve total-variation
problems and presents numerical experiments which also demonstrate its
remarkable efficiency there as well.  Finally, we conclude with
Section~\ref{sec:discussion} discussing further extensions, which
would address an even wider range of linear inverse
problems. 

{\em Notations.}  Before we begin, it is best to provide a brief
summary of the notations used throughout the paper. As usual, vectors
are written in small letters and matrices in capital letters. The
$i$th entry of a vector $x$ is denoted $x[i]$ and the $(i,j)$th entry
of the matrix $A$ is $A[i,j]$.

It is convenient to introduce some common optimization problems
that will be discussed throughout. Solving sparse reconstruction
problems can be approached via several different equivalent
formulations. In this paper, we particularly emphasize the quadratically
constrained $\ell_1$-minimization problem
\begin{equation}\label{eq:bp}
\begin{array}{lll}
(\text{BP}_{\epsilon}) & \quad     \text{minimize}   & \quad \|x\|_{\ell_1}\\
&  \quad \text{subject to} & \quad  \|b - Ax\|_{\ell_2} \le \epsilon,
\end{array}
\end{equation}
where $\epsilon$ quantifies the uncertainty about the measurements $b$
as in the situation where the measurements are noisy. 
This formulation is often preferred because a reasonable estimate of
$\epsilon$ may be known. A second
frequently discussed approach considers solving this problem in
Lagrangian form, i.e.
\begin{equation}\label{eq:pls}
(\text{QP}_{\lambda}) \quad \text{minimize} \quad \lambda \|x\|_{\ell_1} + \frac{1}{2}  \|b - Ax\|_{\ell_2}^2, 
\end{equation}
and is also known as the basis pursuit denoising problem (BPDN)
\cite{wave:donoho98}. This problem is popular in signal and image
processing because of its loose interpretation as a maximum {\em a
posteriori} estimate in a Bayesian setting. In statistics, the same
problem is more well-known as the lasso \cite{lasso:tib}
\begin{equation}\label{eq:lasso}
\begin{array}{lll}
(\text{LS}_{\tau}) & \quad     \text{minimize}   & \quad \|b - Ax\|_{\ell_2}\\
&  \quad \text{subject to} & \quad  \|x\|_{\ell_1} \le \tau.
\end{array}
\end{equation}
Standard optimization theory \cite{Rockafellar:1970px} asserts that
these three problems are of course equivalent provided that $\epsilon,
\lambda, \tau$ obey some special relationships. With the exception of
the case where the matrix $A$ is orthogonal, this functional
dependence is hard to compute \cite{SPGL}. Because it is usually more
natural to determine an appropriate $\epsilon$ rather than an
appropriate $\lambda$ or $\tau$, the fact that \nesta solves
($\text{BP}_\epsilon$) is a significant advantage. Further, note that
theoretical equivalence of course does not mean that all three
problems are just as easy (or just as hard) to solve. For instance,
the constrained problem ($\text{BP}_\epsilon$) is harder to solve than
($\text{QP}_\lambda$), as discussed in
Section~\ref{sec:constrained}. Therefore, the fact that \nesta turns
out to be competitive with algorithms that only solve
($\text{QP}_\lambda$) is quite remarkable.


\section{Nesterov's method}
\label{sec:nesterov}



\subsection{Minimizing smooth convex functions}
In \cite{NesteBook,Neste83}, Nesterov introduces a subtle algorithm to
minimize any smooth convex function $f$ on the convex set
$\mathcal{Q}_p$, 
\begin{equation}\label{eq:nestmin}
\min_{x \in \mathcal{Q}_p} f(x). 
\end{equation}
We will refer to $\mathcal{Q}_p$ as the primal feasible set. The
function $f$ is assumed to be differentiable and its gradient $\nabla
f(x)$ is Lipschitz and obeys
\begin{equation}
||\nabla f(x) - \nabla f(y) ||_{\ell_2} \leq L\| x - y\|_{\ell_2};
\end{equation}
in short, $L$ is an upper bound on the Lipschitz constant. With these
assumptions, Nesterov's algorithm minimizes $f$ over $\mathcal{Q}_p$
by iteratively estimating three sequences $\{x_k\}$, $\{y_k\}$ and
$\{z_k\}$ while smoothing the feasible set $\mathcal{Q}_p$.  The
algorithm depends on two scalar sequences $\{\alpha_k\}$ and
$\{\tau_k\}$ discussed below, and takes the following form:
\begin{center}
\centering
\vspace{0.25in}
\begin{tabular}{|c|} \hline
\begin{minipage}[hbt]{0.95\linewidth}
\vspace{0.15in}

\textsf{\textbf{Initialize} $x_0$. For $k \ge 0$,}\\

\textsf{1. Compute $\nabla f(x_k)$.}\\

\textsf{2. Compute $y_k$:}\\

\hspace{0.2in} $y_k\! = \!\Argmin{x \in Q_p} \frac{L}{2} \| x - x_k \|_{\ell_2}^2 + \langle \nabla f(x_k) , x-x_k \rangle$. \\

\textsf{3. Compute $z_k$:}\\

\hspace{0.2in} $z_k \! = \! \Argmin{x \in Q_p} \frac{L}{\sigma_p} p_p(x) + \sum_{i=0}^k \alpha_i \langle \nabla f(x_i) , x-x_i \rangle$. \\

\textsf{4. Update $x_k$:}\\

\hspace{0.2in} $x_k \! = \! \tau_k z_k +(1-\tau_k)y_k$. \\

\textsf{\textbf{Stop} when a given criterion is valid.}

\vspace{0.15in}
\end{minipage}
\\\hline
\end{tabular}
\vspace{0.25in}
\end{center}

At step $k$, $y_k$ is the current guess of the optimal solution. If we
only performed the second step of the algorithm with $y_{k-1}$ instead
of $x_k$, we would obtain a standard first-order technique with
convergence rate $\order(1/k)$. 

The novelty is that the sequence $z_k$ ``keeps in mind'' the previous
iterations since Step 3 involves a weighted sum of already computed
gradients.  Another aspect of this step is that---borrowing ideas from
smoothing techniques in optimization \cite{NemiBook}---it makes use of
a {\em prox-function} $p_p(x)$ for the primal feasible set $Q_p$. This
function is strongly convex with parameter $\sigma_p$; assuming that
$p_p(x)$ vanishes at the prox-center $x_p^c = \Argmin{x} p_p(x)$, this
gives
\[
p_p(x) \ge \frac{\sigma_p}{2} \|x - x_p^c\|_{\ell_2}^2. 
\]
The prox-function is usually chosen so that $x_p^c \in \mathcal{Q}_p$,
thus discouraging $z_k$ from moving too far away from the center
$x_p^c$. 

The point $x_k$, at which the gradient of $f$ is evaluated, is a
weighted average between $z_k$ and $y_k$. In truth, this is motivated
by a theoretical analysis \cite{SMSN_Nesterov,Tseng08}, which shows
that if $\alpha_k =1/2(k+1)$ and $\tau_k = 2/(k+3)$, then the
algorithm converges to
\[
x^{\star} = \Argmin{x_\in Q_p} f(x)
\]
with the convergence rate
\begin{equation}\label{eq:conv}
f(y_k) - f(x^{\star}) \leq \frac{4L p_p(x^{\star})}{(k+1)^2 \sigma_p}.  
\end{equation}
This decay is far better than what is achievable via standard
gradient-based optimization techniques since we have an approximation
scaling like $L/k^2$ instead of $L/k$.

\subsection{Minimizing nonsmooth convex functions}
In an innovative paper \cite{SMSN_Nesterov}, Nesterov recently
extended this framework to deal with nonsmooth convex
functions. Assume that $f$ can be written as
\begin{equation}
\label{eq:saddlezero}
f(x) = \max_{u \in \mathcal{Q}_d} \langle u, W x \rangle, 
\end{equation}
where $x \in \mathbb{R}^n$, $u \in \mathbb{R}^p$ and $W \in
\mathbb{R}^{p \times n}$. We will refer to $\mathcal{Q}_d$ as the dual
feasible set, and suppose it is convex.  This assumption holds for all
of the problems of interest in this paper---we will see in
Section~\ref{sec:nesta} that this holds for $\|x\|_{\ell_1}$,
$\|Wx\|_{\ell_1}$, the total-variation norm and, in general, for any
induced norm---yet it provides enough information beyond the black-box
model to allow cleverly-designed methods with a convergence rate
scaling like $\mathcal{O}(1/k^2)$ rather than
$\mathcal{O}(1/\sqrt{k})$, in the number of steps $k$.

With this formulation, the minimization \eqref{eq:nestmin} can be
recast as the following saddle point problem:
\begin{equation}
\label{eq:saddle}
\min_{x \in \mathcal{Q}_p} \max_{u \in \mathcal{Q}_d} \langle u,Wx \rangle. 
\end{equation}
The point is that $f$ \eqref{eq:saddlezero} is convex but generally
nonsmooth.  In \cite{SMSN_Nesterov}, Nesterov proposed substituting
$f$ by the smooth approximation
\begin{equation}
\label{eq:smooth-approx}
f_{\mu}(x) = \max_{u \in \mathcal{Q}_d}  \langle u, W x \rangle - \mu\, p_d(u),
\end{equation}
where $p_d(u)$ is a \textit{prox-function} for $\mathcal{Q}_d$; that
is, $p_d(u)$ is continuous and strongly convex on $\mathcal{Q}_d$, with
convexity parameter $\sigma_d$ (we shall assume that $p_d$ vanishes at
some point in $\mathcal{Q}_d$). Nesterov proved that $f_{\mu}$ is
continuously differentiable, and that its gradient obeys 
\begin{equation}\label{eq:dfmu}
\nabla f_{\mu}(x) = W^* u_\mu(x),
\end{equation}
where $u_\mu(x)$ is the optimal solution of
\eqref{eq:smooth-approx}. Furthermore, $\nabla f_{\mu}$ is shown to be
Lipschitz with constant
\begin{equation}
L_{\mu} = \frac{1}{\mu \sigma_d} \|W\|^2 
\end{equation}
($\|W\|$ is the operator norm of $W$). Nesterov's algorithm can then
be applied to $f_{\mu}(x)$ as proposed in \cite{SMSN_Nesterov}. For a
fixed $\mu$, the algorithm converges in $\order(1/k^2)$ iterations.
If we describe convergence in terms of the number of iterations needed
to reach an $\varepsilon$ solution (that is, the number of steps is
taken to produce an $x$ obeying $|f_\mu(x) - \min f_\mu | <
\varepsilon$), then because $\mu$ is approximately proportional to the accuracy of
the approximation, and because $L_\mu$ is proportional to $1/\mu \approx 1/\varepsilon$, the
rate of convergence is $\order( \sqrt{L_\mu/\varepsilon} ) \approx
\order( 1/\varepsilon )$, a significant improvement
over the sub-gradient method which has rate $\order( 1/\varepsilon^2)$.


\section{Extension to Compressed Sensing}\label{sec:nesta}

We now extend Nesterov's algorithm to solve compressed sensing
recovery problems, and refer to this extension as \nestansp.  For now,
we shall be concerned with solving the quadratically constrained
$\ell_1$ minimization problem \eqref{eq:bp}.

\subsection{\nestansp}

We wish to solve \eqref{eq:bp}, i.e. minimize $\|x\|_{\ell_1}$ subject
to $\|b - Ax\|_{\ell_2} \le \epsilon$, where $A \in \mathbb{R}^{m \times
n}$ is singular ($m < n$).

In this section, we assume that $A^*A$ is an orthogonal projector,
i.e.~the rows of $A$ are orthonormal. This is often the case in
compressed sensing applications where it is common to take $A$ as a
submatrix of a unitary transformation which admits a fast algorithm for
matrix-vector products; special instances include the discrete Fourier
transform, the discrete cosine transform, the Hadamard transform, the
noiselet transform, and so on.  Basically,
collecting incomplete structured orthogonal measurements is the prime method for
efficient data acquisition in compressed sensing.

Recall that the $\ell_1$ norm is of the form 
\[
\|x\|_{\ell_1} = \max_{u \in \mathcal{Q}_d} \langle u,x \rangle,  
\]
where the dual feasible set is the $\ell_\infty$ ball
\[
\mathcal{Q}_d = \{ u : \|u\|_{\infty} \leq 1\}. 
\]
Therefore, a natural smooth approximation to the $\ell_1$ norm is
\[
f_\mu(x) =  \max_{u \in \mathcal{Q}_d} \langle u,x \rangle - \mu\, p_d(u), 
\]
where $p_d(u)$ is our dual prox-function. For $p_d(u)$, we would like
a strongly convex function, which is known analytically and takes its
minimum value (equal to zero) at some $u_{d}^{c} \in
\mathcal{Q}_d$. It is also usual to have $p_d(u)$ separable.  Taking
these criteria into account, a convenient choice is $p_d(u) =
\frac{1}{2} \|u\|_{\ell_2}^2$ whose strong convexity parameter
$\sigma_d$ is equal to $1$. With this prox-function, $f_\mu$ is the
well-known Huber function and $\nabla f_\mu$ is Lipschitz with
constant $1/\mu$.\footnote{In the case of total-variation minimization
 in which $f(x) = \|x\|_{TV}$, $f_\mu$ is not a known function.} In
particular, $\nabla f_\mu(x)$ is given by
\begin{equation}
\nabla f_{\mu}(x)[i] = \begin{cases}
\mu^{-1} \, x[i], &  \text{if }   |x[i]| < \mu, \\
\text{sgn}(x[i]), &  \text{otherwise}.
\end{cases}
\label{eq:nabfmu}
\end{equation}
Following Nesterov, we need to solve the smooth constrained problem
\begin{equation}
\min_{x \in \mathcal{Q}_p} f_{\mu}(x), 
\label{eq:l1smooth}
\end{equation}
where $\mathcal{Q}_p = \left \{ x : \|b - Ax \|_{\ell_2} \le \epsilon
\right \}$. Once the gradient of $f_{\mu}$ at $x_k$ is computed, Step
$2$ and Step $3$ of \nesta consist in updating two auxiliary iterates,
namely, $y_k$ and $z_k$.

\subsection{Updating $y_k$}
\label{sec:yk}
To compute $y_k$, we need to solve 
\begin{equation}
y_k = \Argmin{x \in \mathcal{Q}_p} \frac{L_{\mu}}{2} \|x_k - x \|_{\ell_2}^2 + \langle \nabla f_{\mu}(x_k),x-x_k \rangle, 
\label{eq:yk}
\end{equation}
where $x_k$ is given.  The Lagrangian for this problem is of course 
\begin{equation}
\mathcal{L}(x,\lambda) =  \frac{L_{\mu}}{2} \|x_k - x \|_{\ell_2}^2 + \frac{\lambda}{2} \left( \|b - Ax\|_{\ell_2}^2 - \epsilon^2 \right)+ \langle \nabla f_{\mu}(x_k),x-x_k \rangle,
\end{equation}
and at the primal-dual solution $(y_k,\lambda_\epsilon)$, the
Karush-Kuhn-Tucker (KKT) conditions \cite{Rockafellar:1970px} read
\begin{align*}
\|b - Ay_k \|_{\ell_2}^2  & \leq  \epsilon, \\
\lambda_\epsilon  & \geq  0, \\
\lambda_\epsilon \left( \|b - Ay_k \|_{\ell_2}^2 - \epsilon^2 \right) & =  0, \\
L_\mu (y_k - x_k) + \lambda_\epsilon A^*(Ay_k - b) + \nabla f_{\mu}(x_k)& =  0.  
\end{align*}
From the stationarity condition, $y_k$ is the solution to the linear
system
\begin{equation}
\left(I + \frac{\lambda}{L_{\mu}}A^*A \right)y_k = \frac{\lambda}{L_{\mu}} A^*b + x_k - \frac{1}{L_{\mu}} \nabla f_{\mu}(x_k).
\end{equation}
As discussed earlier, our assumption is that $A^*A$ is an orthogonal
projector so that 
\begin{equation}\label{eq:upyk}
y_k = \left(I - \frac{\lambda}{\lambda + L_{\mu}}A^*A \right)
\left(\frac{\lambda}{L_{\mu}} A^*b + x_k - \frac{1}{L_{\mu}} 
\nabla f_{\mu}(x_k)\right).
\end{equation}
In this case, computing $y_k$ is cheap since no matrix inversion is
required---only a few matrix-vector products are necessary.  Moreover,
from the KKT conditions, the value of the optimal Lagrange multiplier
is obtained explicitly, and equals
\begin{equation}
\lambda_\epsilon = \max(0, \epsilon^{-1} \|b - Aq\|_{\ell_2} - L_\mu), \quad q = x_k - L_{\mu}^{-1} \nabla f_{\mu}(x_k).
\label{eq:lambdaKKT}
\end{equation}
Observe that this can be computed beforehand since it only depends on
$x_k$ and $\nabla f_\mu(x_k)$.

\subsection{Updating $z_k$}
To compute $z_k$, we need to solve 
\begin{equation}
z_k = \Argmin{x \in \mathcal{Q}_p} \frac{L_{\mu}}{\sigma_p} p_p(x) + \langle \sum_{i \le k} \alpha_i \nabla f_{\mu}(x_i),x-x_k \rangle, 
\end{equation}
where $p_p(x)$ is the primal prox-function. The point $z_k$ differs
from $y_k$ since it is computed from a weighted cumulative gradient
$\sum_{i \le k} \alpha_i \nabla f_{\mu}(x_i)$, making it less prone to
zig-zagging, which typically occurs when we have highly elliptical level
sets. This step keeps a memory from the previous steps and forces
$z_k$ to stay near the prox-center.

A good primal prox-function is a smooth and strongly convex function
that is likely to have some positive effect near the solution. In the
setting of \eqref{eq:cs}, a suitable smoothing prox-function may be
\begin{equation}\label{eq:primalpx}
p_p(x) =  \frac{1}{2}\|x - x_0 \|_{\ell_2}^2
\end{equation}
for some $x_0 \in \mathbb{R}^n$, e.g.~an initial guess of the
solution. Other choices of primal feasible set $\mathcal{Q}_p$ may
lead to other choices of prox-functions. For instance, when
$\mathcal{Q}_p$ is the standard simplex, choosing an entropy distance
for $p_p(x)$ is smarter and more efficient, see
\cite{SMSN_Nesterov}. In this paper, the primal feasible set is
quadratic, which makes the Euclidean distance a reasonable
choice. What is more important, however, is that this choice allows
very efficient computations of $y_k$ and $z_k$ while other choices may
considerably slow down each Nesterov iteration.  Finally, notice that
the bound on the error at iteration $k$ in \eqref{eq:conv} is
proportional to $p_p(x^\star)$; choosing $x_0$ wisely (a good first
guess) can make $p_p(x^\star)$ small. When nothing is known about the
solution, a natural choice may be $x_0 = A^* b$; this idea will be
developed in Section~\ref{sec:conti}.

With \eqref{eq:primalpx}, the strong convexity parameter of $p_p(x)$
is equal to $1$, and to compute $z_k$ we need to solve
\begin{equation}
z_k = \Argmin{x} \frac{L_{\mu}}{2} \|x - x_0\|_{\ell_2}^2 + \frac{\lambda}{2}\|b - Ax \|_{\ell_2}^2+ \langle \sum_{i \le k} \alpha_i \nabla f_{\mu}(x_i),x-x_k \rangle
\end{equation}
for some value of $\lambda$.  Just as before, the solution is given by
\begin{equation}\label{eq:upzk}
z_k = \left(I - \frac{\lambda}{\lambda + L_{\mu}}A^*A \right)\left(\frac{\lambda}{L_{\mu}} A^*b + x_0 - \frac{1}{L_{\mu}} \sum_{i \leq k}\alpha_i\nabla f_{\mu}(x_i)\right),
\end{equation}
with a value of the Lagrange multiplier equal to
\begin{equation}
\lambda_\epsilon = \max(0, \epsilon^{-1}  \|b - Aq\|_{\ell_2} - L_\mu), \quad q = x_0 - L^{-1}_{\mu}\sum_{i \leq k} \nabla \alpha_i f_{\mu}(x_i).
\end{equation}
In practice, the instances $\{ \nabla f_{\mu}(x_i)\}_{i \leq k}$ have
not to be stored; one just has to store the cumulative gradient
$\sum_{i \leq k} \alpha_i\nabla f_{\mu}(x_i)$.


\subsection{Computational complexity}
The computational complexity of each of \nestansp's step is clear. In
large-scale problems, most of the work is in the application of $A$
and $A^*$. Put $\mathcal{C}_A$ for the complexity of applying $A$ or
$A^*$. The first step, namely, computing $\nabla f_{\mu}$, only
requires vector operations whose complexity is $\mathcal{O}(n)$. Step
$2$ and $3$ require the application of $A$ or $A^*$ three times each
(we only need to compute $A^*b$ once). Hence, the total complexity of
a single \nesta iteration is $6 \, \mathcal{C}_A + \mathcal{O}(n)$
where $\mathcal{C}_A$ is dominant. 

The calculation above are in some sense overly pessimistic.  In
compressed sensing applications, it is common to choose $A$ as a
submatrix of a unitary transformation $U$, which admits a fast
algorithm for matrix-vector products. In the sequel, it might be useful
to think of $A$ as a subsampled DFT. In this case, letting $R$ be the
$m \times n$ matrix extracting the observed measurements, we have $A =
R U$. The trick then is to compute in the $U$-domain directly. Making
the change of variables $x \leftarrow Ux$, our problem is
\[ 
\begin{array}{ll}
\text{minimize}   & \quad \hat{f}_\mu(x)\\
\text{subject to} & \quad  \|b - R x\|_{\ell_2} \le \epsilon,
\end{array}
\]
where $\hat{f}_\mu = f_\mu \circ U^*$. The gradient of
$\hat{f}_\mu$ is then
\[
\nabla \hat{f}_\mu (x) = U \, \nabla f_\mu(U^*x).
\]
With this change of variables, Steps $2$ and $3$ do not require
applying $U$ or $U^*$ since
\[
{y}_k = \left(I - \frac{\lambda}{\lambda + L_{\mu}} R^*R
\right)\left(\frac{\lambda}{L_{\mu}} R^*b + x_k - \frac{1}{L_\mu} \nabla
f_{\mu}(x_k)\right),
\]
where $R^* R$ is the diagonal matrix with $0/1$ diagonal entries
depending on whether a coordinate is sampled or not. As before,
$\lambda_\epsilon = \max(0,\|b-Rq\|_{\ell_2} - L_\mu)$ with $q = {x}_k
- L_{\mu}^{-1} \nabla \hat{f}_{\mu}({x}_k)$.  The complexity of Step
$2$ is now $\order(n)$ and the same applies to Step $3$.

Put $\mathcal{C}_U$ for the complexity of applying $U$ and $U^*$. The
complexity of Step $1$ is now $2 \, \mathcal{C}_{U}$, so that this
simple change of variables reduces the cost of each \nesta iteration
to $2\,\mathcal{C}_U + \mathcal{O}(n)$. For example, in the case of a
subsampled DFT (or something similar), the cost of each iteration is
essentially that of two FFTs. Hence, each iteration is extremely fast.


\subsection{Parameter selection}
\label{sec:paramset}

\nesta involves the selection of a single smoothing parameter $\mu$
and of a suitable stopping criterion. For the latter, our experience
indicates that a robust and fairly natural stopping criterion is to
terminate the algorithm when the relative variation of $f_{\mu}$ is
small. Define $\Delta f_{\mu}$ as
\begin{equation}\label{eq:stopc}
 \Delta f_\mu := \frac{|f_\mu(x_k) - \bar{f}_\mu(x_k)|}{\bar{f}_\mu(x_k)}, 
 \quad \bar{f}_\mu(x_k) := 
 \frac{1}{\min \{10,k\}}\sum_{l=1}^{\min \{10,k\}} f_{\mu}(x_{k-l}).  
\end{equation}
Then convergence is claimed when
\[
\Delta f_{\mu} < \delta
\]
for some $\delta > 0$.  In our experiments, $\delta \in \{10^{-5},
10^{-6}, 10^{-7}, 10^{-8}\}$ depending upon the desired accuracy.

The choice of $\mu$ is based on a trade-off between the accuracy of
the smoothed approximation $f_\mu$ (basically, $\lim_{\mu \rightarrow
 0} f_\mu(x) = \|x\|_{\ell_1}$) and the speed of convergence (the
convergence rate is proportional to $\mu$). With noiseless data, $\mu$
is directly linked to the desired accuracy.  To illustrate this, we
have observed in \cite{BC_Icip09} that when the true signal $x^0$ is
exactly sparse and is actually the minimum solution under the equality
constraints $Ax^0 = b$, the $\ell_\infty$ error on the nonzero entries
is on the order of $\mu$.  The link between $\mu$ and accuracy will be
further discussed in Section~\ref{sec:mu}.




\subsection{Accelerating \nesta with continuation}\label{sec:conti}

Inspired by homotopy techniques which find the solution to the lasso
problem \eqref{eq:lasso} for values of $\tau$ ranging in an interval
$[0,\tau_{\max}]$, \cite{FPC} introduces a fixed point continuation
technique which solves $\ell_1$-penalized least-square problems
\eqref{eq:pls}
\[
(\text{QP}_{\lambda}) \quad \text{minimize} \quad \lambda
\|x\|_{\ell_1} + \frac{1}{2} \|b - Ax\|_{\ell_2}^2,
\]
for values of $\lambda$ obeying $0 < \lambda <
\|A^*b\|_{\ell_\infty}$. The continuation solution approximately
follows the path of solutions to the problem \QPL and, hence, the
solutions to \eqref{eq:cs} and \eqref{eq:lasso} may be found by
solving a sequence a $\ell_1$ penalized least-squares problems.

The point of this is that it has been noticed (see
\cite{FPC,Osborne2000,donoho:l1greedy}) that solving \eqref{eq:pls}
(resp.~the lasso \eqref{eq:lasso}) is faster when $\lambda$ is large
(resp.~$\tau$ is low). This observation greatly motivates the use of
continuation for solving \eqref{eq:pls} for a fixed $\lambda_f$.  The
idea is simple: propose a sequence of problems with decreasing values
of the parameter $\lambda$, $\lambda_0 > \cdots > \lambda_f$, and use
the intermediate solution as a warm start for the next problem. This
technique has been used with some success in
\cite{ist:fnw,SPGL}. Continuation has been shown to be a very
successful tool to increase the
speed of convergence, in particular when dealing with large-scale
problems and high dynamic range signals.

Likewise, our proposed algorithm can greatly benefit from a
continuation approach. Recall that to compute $y_k$, we need to solve
\begin{align*}
y_k & = \Argmin{x \in \mathcal{Q}_p} \frac{L_\mu}{2}\|x - x_k \|_{\ell_2}^2 + \langle c,x \rangle\\
& = \Argmin{x \in \mathcal{Q}_p} \|x - (x_k - L_\mu^{-1} c)\|_{\ell_2}^2
\end{align*}
for some vector $c$.  Thus with $\mathcal{P}_{\mathcal{Q}_p}$ the
projector onto $\mathcal{Q}_p$, $y_k = \mathcal{P}_{\mathcal{Q}_p}(x_k
- L_\mu^{-1} c)$. Now two observations are in order.
\begin{remunerate}
\item Computing $y_k$ is similar to a projected gradient step as the
Lipschitz constant $L^{-1}_{\mu}$ plays the role of the step
size. Since $L_\mu$ is proportional to $\mu^{-1}$, the larger $\mu$,
the larger the step-size, and the faster the convergence. This also
applies to the sequence $\{z_k\}$.  

\item For a fixed value of $\mu$, the convergence rate of the
algorithm obeys 
\[
f_{\mu}(y_k) - f_{\mu}(x^\star_\mu) \le \frac{2 L_\mu \| x^\star_{\mu}
- x_0\|_{\ell_2}^2}{k^2},
\]
where $x^\star_\mu$ is the optimal solution to $\min f_\mu$ over
$\mathcal{Q}_p$. On the one hand, the convergence rate is proportional
to $\mu^{-1}$, so a large value of $\mu$ is beneficial. On the other
hand, choosing a good guess $x_0$ close to $x_\mu^\star$ provides a
low value of $p_p(x_{\mu}^\star) = \frac{1}{2}  \|x^\star_{\mu}
- x_0\|_{\ell_2}^2$, also improving the
rate of convergence.  Warm-starting with $x_0$ from a previous solve
not only changes the starting point of the algorithm, but it
beneficially changes $p_p$ as well.
\end{remunerate}

These two observations motivate the following continuation-like
algorithm: 
\begin{center}
\centering
\vspace{0.15in}
\begin{tabular}{|c|} \hline
\begin{minipage}[hbt]{0.95\linewidth}
\vspace{0.1in}

\textsf{\textbf{Initialize} $\mu_0$, $x_0$ and the number of continuation steps $\CtSteps$. For $t \ge 1$,} \\

\textsf{1. Apply Nesterov's algorithm with $\mu = \mu^{(t)}$ and $x_0 = x_{\mu^{(t-1)}}$.} \\

\textsf{2. Decrease the value of $\mu$: $\mu^{(t+1)} = \gamma \mu^{(t)}$ with $\gamma < 1$.} \\

\textsf{\textbf{Stop} when the desired value of $\mu_f$ is reached.}

\vspace{0.1in}
\end{minipage}
\\\hline
\end{tabular}
\vspace{0.15in}
\end{center}
This algorithm iteratively finds the solutions to a succession of
problems with decreasing smoothing parameters $\mu_0 > \cdots > \mu_f
= \gamma^\CtSteps\mu_0$ producing a sequence of---hopefully--- finer
estimates of $x_{\mu_f}^\star$; these intermediate solutions are cheap
to compute and provide a string of convenient first guess for the next
problem. In practice, they are solved with less accuracy, making them
even cheaper to compute.

The value of $\mu_f$ is based on a desired accuracy as explained in
Section~\ref{sec:paramset}. As for an initial value $\mu_0$,
\eqref{eq:nabfmu} makes clear that the smoothing parameter plays a
role similar to a threshold. A first choice may then be $\mu_0 = 0.9
\|A^*b\|_{\ell_\infty}$. 

We illustrate the good behavior of the
continuation-inspired algorithm by applying \nesta with continuation
to solve a sparse reconstruction problem from partial frequency
data. In this series of experiments, we assess the performance of
\nesta while the dynamic range of the signals to be recovered
increases.

The signals $x$ are $s$-sparse signals---that is, have exactly $s$
nonzero components---of size $n=4096$ and $s = m/40$. Put $\Lambda$
for the indices of the nonzero entries of $x$; the amplitude of each
nonzero entry is distributed uniformly on a logarithmic scale with a
fixed dynamic range. Specifically, each nonzero entry is generated as
follows:
\begin{equation}\label{eq:entriesmod}
x[i] = \eta_1[i] 10^{\alpha \eta_2[i]}, 
\end{equation}
where $\eta_1[i] = \pm 1$ with probability $1/2$ (a random sign) and
$\eta_2[i]$ is uniformly distributed in $[0,1]$. The parameter
$\alpha$ quantifies the dynamic range. Unless specified otherwise, a
dynamic range of $d$ dB means that $\alpha = d/20$ (since for large
signals $\alpha$ is approximately the logarithm base 10 of the ratio
between the largest and the lowest magnitudes). For instance, 80 dB
signals are generated according to \eqref{eq:entriesmod} with $\alpha
= 4$.

The measurements $Ax$ consist of $m = n/8$ random discrete cosine
measurements so that $A^*A$ is diagonalized by the DCT. Finally, $b$
is obtained by adding a white Gaussian noise term with standard
deviation $\sigma = 0.1$. The initial value of the smoothing parameter
is $\mu_0 = \|A^*b\|_{\ell_\infty}$ and the terminal value is $\mu_f =
2\sigma$. The algorithm terminates when the relative variation of
$f_\mu$ is lower than $\delta = 10^{-5}$.  \nesta with continuation is
applied to 10 random trials for varying number of continuation steps
$\CtSteps$ and various values of the dynamic
range. Figure~\ref{fig:continuation1} graphs the value of $f_{\mu_f}$
while applying \nesta with and without continuation as a function of
the iteration count.  The number of continuation steps is set to
$\CtSteps=4$.

\begin{figure}
\begin{center}
\includegraphics[scale=0.37]{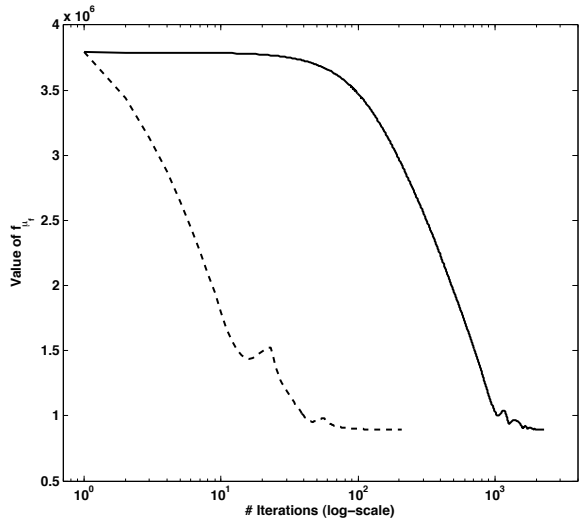}
\caption{Value of $f_{\mu_f}(x_k)$ as a function of iteration
$k$. \textit{Solid line:} without continuation. \textit{Dashed
  line:} with continuation. Here, the test signal has 100 dB of
dynamic range.}
\label{fig:continuation1}
\end{center}
\end{figure}

One can observe that computing the solution to $\min f_{\mu_f}$ (solid
line) takes a while when computed with the final value $\mu_f$; notice
that \nesta seems to be slow at the beginning (number of iterations
lower than 15). In the meantime \nesta with continuation rapidly
estimates a sequence of coarse intermediate solutions that converges
to the solution to $\min f_{\mu_f}$ In this case, continuation clearly
enhances the global speed of convergence with a factor $10$.
Figure~\ref{fig:continuation2} provides deeper insights into the
behavior of continuation with \nesta and shows the number of
iterations required to reach convergence for varying values of the
continuation steps $\CtSteps$ for different values of the dynamic
range.

\begin{figure}
\begin{center}
\includegraphics[scale=0.4]{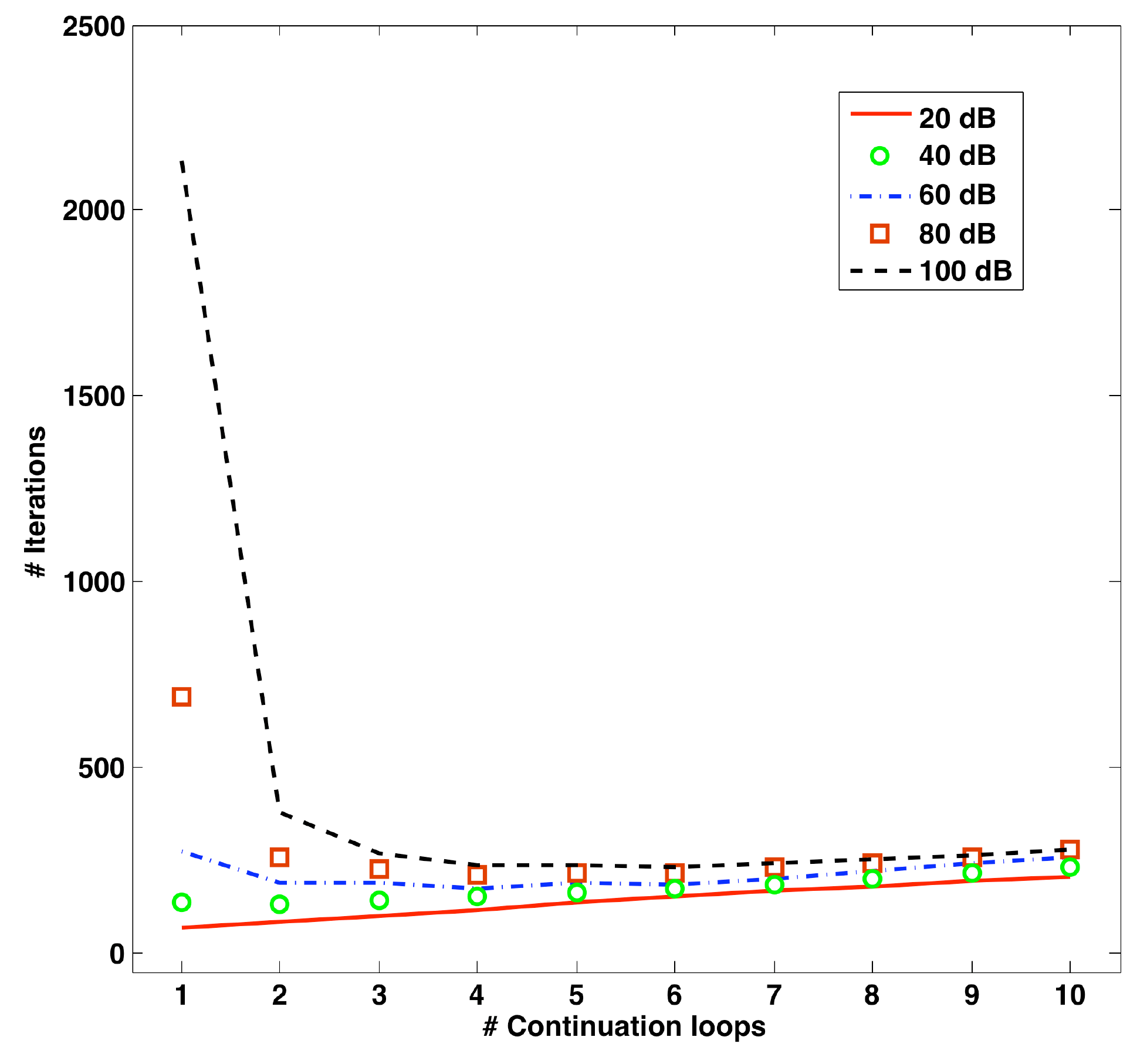}
\caption{Total number of iterations required for convergence with a
varying number of continuation steps and for different values of the
dynamic range.}
\label{fig:continuation2}
\end{center}
\end{figure}

When the ratio $\mu_0/\mu_f$ is low or when the required accuracy is
low, continuation is not as beneficial: intermediate continuation
steps require a number of iterations which may not speed up overall
convergence.  The stepsize which is about $L^{-1}_{\mu_f}$ works well
in this regime. When the dynamic range increases and we require more
accuracy, however, the ratio $\mu_0/\mu_f$ is large, since $\mu_0 = .9
\|A^*b\|_{\ell_\infty} \approx \|x\|_{\ell_\infty} \gg \sigma$, and
continuation provides considerable improvements. In this case, the
step size $L^{-1}_{\mu_{f}}$ is too conservative and it takes a while
to find the large entries of $x$. Empirically, when the dynamic range
is $100$ dB, continuation improves the speed of convergence by a
factor of $8$.  As this factor is likely to increase exponentially
with the dynamic range (when expressed in dB), \nesta with
continuation seems to be a better candidate for solving sparse
reconstruction problems with high accuracy.

Interestingly, the behavior of \nesta with continuation seems to be
quite stable: increasing the number of continuation steps does not
increase dramatically the number of iterations. In practice, although
the ideal $\CtSteps$ is certainly signal dependent, we have observed
that choosing $\CtSteps \in \{4,5,6\}$ leads to reasonable results.


\subsection{Some theoretical considerations}\label{sec:th}

The convergence of \nesta with and without continuation is
straightforward. The following theorem states that each continuation
step with $\mu = \mu^{(t)}$ converges to $x^\star_{\mu}$. Global
convergence is proved by applying this theorem to $t = \CtSteps$.
\begin{theorem}
At each continuation step $t$, $\lim_{k \rightarrow \infty} y_k =
x_{\mu^{(t)}}^\star$, and
\[
f_{\mu^{(t)}}(y_k) - f_{\mu^{(t)}}(x_{\mu^{(t)}}^\star) \le \frac{2
L_{\mu^{(t)}} \|x_{\mu^{(t)}}^\star - x_{\mu^{(t-1)}}
\|_{\ell_2}^2}{k^2}. 
\]
\end{theorem}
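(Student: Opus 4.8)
The plan is to read the statement as nothing more than a direct instantiation of Nesterov's convergence estimate \eqref{eq:conv} for the \emph{specific} smooth objective, feasible set, and prox-function that a single continuation step of \nesta actually uses. Recall that at continuation step $t$ the algorithm is exactly Nesterov's method of Section~\ref{sec:nesterov} run on the smooth convex function $f_{\mu^{(t)}}$ over the primal feasible set $\mathcal{Q}_p = \{x : \|b - Ax\|_{\ell_2} \le \epsilon\}$, with the scalar schedule $\alpha_k = \tfrac12(k+1)$, $\tau_k = 2/(k+3)$ for which \eqref{eq:conv} was proved. So I would simply verify that the hypotheses of that estimate hold here and substitute the correct constants.

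First I would collect the constants. The gradient $\nabla f_{\mu^{(t)}}$ is Lipschitz with constant $L_{\mu^{(t)}}$, as recorded just after \eqref{eq:smooth-approx}. The primal prox-function is the one chosen in \eqref{eq:primalpx}, namely $p_p(x) = \tfrac12 \|x - x_0\|_{\ell_2}^2$, whose strong convexity parameter is $\sigma_p = 1$ and whose prox-center is $x_0$; for step $t$ the warm start is $x_0 = x_{\mu^{(t-1)}}$. Substituting $L = L_{\mu^{(t)}}$, $\sigma_p = 1$, and $p_p(x^\star_{\mu^{(t)}}) = \tfrac12\|x^\star_{\mu^{(t)}} - x_{\mu^{(t-1)}}\|_{\ell_2}^2$ into \eqref{eq:conv} yields
\[
f_{\mu^{(t)}}(y_k) - f_{\mu^{(t)}}(x^\star_{\mu^{(t)}}) \le \frac{2 L_{\mu^{(t)}} \|x^\star_{\mu^{(t)}} - x_{\mu^{(t-1)}}\|_{\ell_2}^2}{(k+1)^2},
\]
and since $(k+1)^2 \ge k^2$ the stated bound follows at once. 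This part is entirely routine once the dictionary between the abstract scheme and \nesta is written down.

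The genuinely nontrivial piece is the iterate convergence $\lim_{k\to\infty} y_k = x^\star_{\mu^{(t)}}$, since the displayed rate controls only function values while $f_{\mu^{(t)}}$ is merely convex---the Huber objective is affine on the region where some coordinate exceeds $\mu$ in magnitude, so strong convexity of the objective is unavailable, and moreover $\mathcal{Q}_p$ is unbounded along the null space of $A$. The argument I would use is coercivity: because $f_{\mu^{(t)}}$ grows like $\|x\|_{\ell_1}$ at infinity, its sublevel sets are bounded, and since the bound above forces $f_{\mu^{(t)}}(y_k) \to \min_{\mathcal{Q}_p} f_{\mu^{(t)}}$, the tail of $\{y_k\}$ is trapped in one such bounded sublevel set. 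Hence $\{y_k\}$ is bounded, every subsequential limit lies in the closed set $\mathcal{Q}_p$, and by continuity it minimizes $f_{\mu^{(t)}}$ there. Upgrading ``every limit point is a minimizer'' to a genuine limit requires the minimizer $x^\star_{\mu^{(t)}}$ to be unique (otherwise the statement should be read as convergence of $\{y_k\}$ to the solution set). I expect this coercivity-plus-uniqueness step to be the main obstacle, whereas the quantitative rate is an immediate transcription of \eqref{eq:conv}.
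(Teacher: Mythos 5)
Your proposal is correct, and for the quantitative rate it is exactly the paper's argument: the paper's entire proof is the single line ``Immediate by using [Theorem 2]\cite{SMSN_Nesterov},'' i.e.\ a citation of Nesterov's convergence estimate, which is what you instantiate explicitly with $L = L_{\mu^{(t)}}$, $\sigma_p = 1$, and $p_p(x^\star_{\mu^{(t)}}) = \tfrac12\|x^\star_{\mu^{(t)}} - x_{\mu^{(t-1)}}\|_{\ell_2}^2$ (the constants and the passage from $(k+1)^2$ to $k^2$ check out). Where you differ is on the claim $\lim_{k\to\infty} y_k = x^\star_{\mu^{(t)}}$: the paper simply folds this into the citation, even though Nesterov's theorem controls only function values, whereas you supply the missing bridge---coercivity of the Huber objective (indeed $f_\mu(x) \ge \|x\|_{\ell_1} - n\mu/2$, so sublevel sets are bounded even though $\mathcal{Q}_p$ is unbounded along the null space of $A$), hence boundedness of the tail of $\{y_k\}$, hence every subsequential limit is a minimizer. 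Your further observation that full-sequence convergence requires uniqueness of the minimizer is a genuine point that the paper glosses over: since $f_\mu$ is affine in any coordinate exceeding $\mu$ in magnitude, strict convexity fails, and the minimizer over $\mathcal{Q}_p$ need not be unique in general; absent uniqueness, the honest statement is convergence of $\{y_k\}$ to the solution set (or of $f_{\mu^{(t)}}(y_k)$ to the optimal value, which is all the displayed bound asserts anyway). So your proof is, if anything, more complete than the paper's; the only caveat is that the limit claim as literally stated needs the uniqueness hypothesis you identified, a gap inherited from the paper rather than introduced by you.
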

\begin{proof}
Immediate by using \cite [Theorem 2]{SMSN_Nesterov}.
\end{proof}

As mentioned earlier, continuation may be valuable for improving the
speed of convergence.  Let each continuation step $t$ stop after
$\mathcal{N}^{(t)}$ iterations with
\[
\mathcal{N}^{(t)} = \sqrt{\frac{2L_{\mu^{(t)}}}{\gamma^t
  \delta_0}}\|x_{\mu^{(t)}}^\star - x_{\mu^{(t-1)}}^\star
\|_{\ell_2}
\]
so that we have 
\[
f_{\mu^{(t)}}(y_k) - f_{\mu^{(t)}}(x_{\mu^{(t)}}^\star) \le
\gamma^t\delta_0,
\] 
where the accuracy $\gamma^t\delta_0$ becomes tighter as $t$
increases.  Then summing up the contribution of all the continuation
steps gives
\[
\mathcal{N}_c = \sqrt{\frac{2}{\mu_0 \delta_0}}
\sum_{t=1}^\CtSteps\gamma^{-t}\|x_{\mu^{(t)}}^\star -
x_{\mu^{(t-1)}}^\star \|_{\ell_2}. 
\]

When \nesta is applied without continuation, the number of iterations
required to reach convergence is
\[
\mathcal{N} = \sqrt{\frac{2}{\mu_0 \delta_0}} \gamma^{-\CtSteps}
\|x_{\mu_f}^\star - x_0 \|_{\ell_2}.
\]
Now the ratio $\mathcal{N}_c/\mathcal{N}$ is given by
\begin{equation}\label{eq:nupbound}
\frac{\mathcal{N}_c}{\mathcal{N}} = 
\sum_{t=1}^\CtSteps\gamma^{\CtSteps-t}\frac{\|x_{\mu^{(t)}}^\star 
- x_{\mu^{(t-1)}}^\star \|_{\ell_2}}{\|x_{\mu_f}^\star - x_0 \|_{\ell_2}}. 
\end{equation}
Continuation is definitely worthwhile when the right-hand side is
smaller than $1$. Interestingly, this quantity is directly linked to
the path followed by the sequence $x_0 \rightarrow
x_{\mu^{(1)}}\rightarrow \cdots \rightarrow x_{\mu_f}$. More
precisely, it is related to the smoothness of this path; for instance,
if all the intermediate points $x_{\mu^{(t)}}$ belong to the segment
$[x_0,x_{\mu_f}]$ in an ordered fashion, then $\sum_t
\|x_{\mu^{(t)}}^\star - x_{\mu^{(t-1)}} \|_{\ell_2} =
\|x_{\mu_f}^\star - x_0 \|_{\ell_2}$. Hence,
$\frac{\mathcal{N}_c}{\mathcal{N}} < 1$ and continuation improves the
convergence rate.

Figure~\ref{fig:solpaths} illustrates two typical solution paths with
continuation. When the sequence of solutions obeys $\|x_0\|_{\ell_1}
\geq \ldots \|x_{\mu^{(t)}}^\star\|_{\ell_1} \ldots \geq
\|x_{\mu_f}^\star\|_{\ell_1}$ (this is the case when $x_0 = A^*b$ and
$\mu_1 \geq \ldots \mu^{(t)} \ldots \geq \mu_f$), the solution path is
likely to be ``smooth;'' that is, the solutions obey
$\|x_{\mu^{(t)}}^\star - x_{\mu_f}^\star\|_{\ell_2} \geq
\|x_{\mu^{(t+1)}}^\star - x_{\mu_f}^\star\|_{\ell_2}$ as on the left
of Figure~\ref{fig:solpaths}. The ``nonsmooth'' case on the right of
Figure~\ref{fig:solpaths} arises when the sequence of smoothing
parameters does not provide estimates of $x_{\mu_f}^\star$ that are
all better than $x_0$. Here, computing some of the intermediate points
$\{x_{\mu^{(t)}}^\star\}$ is wasteful and continuation fails to be
faster.

\begin{center}
\begin{figure}
\includegraphics[scale=0.35]{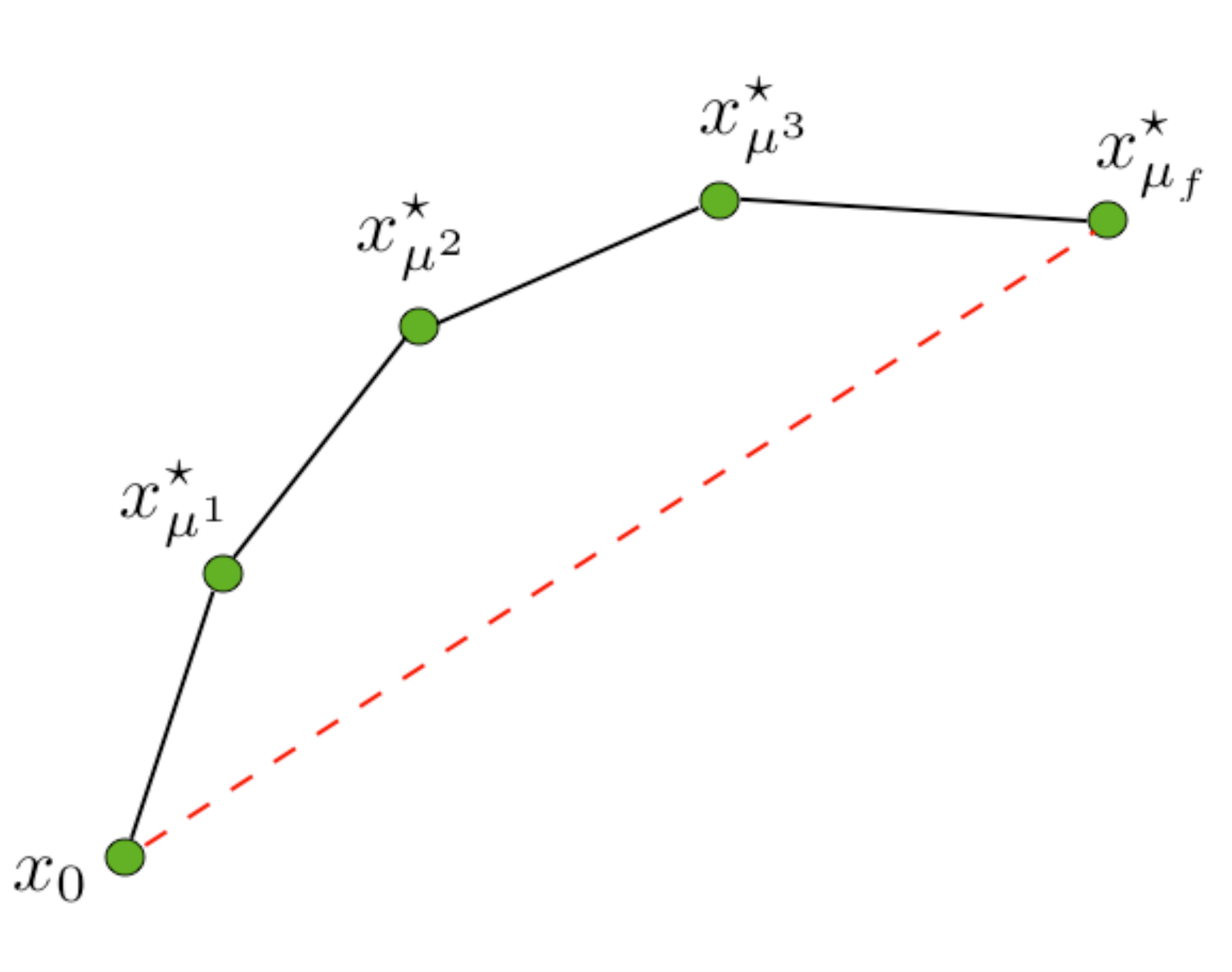}
\hfill
\includegraphics[scale=0.35]{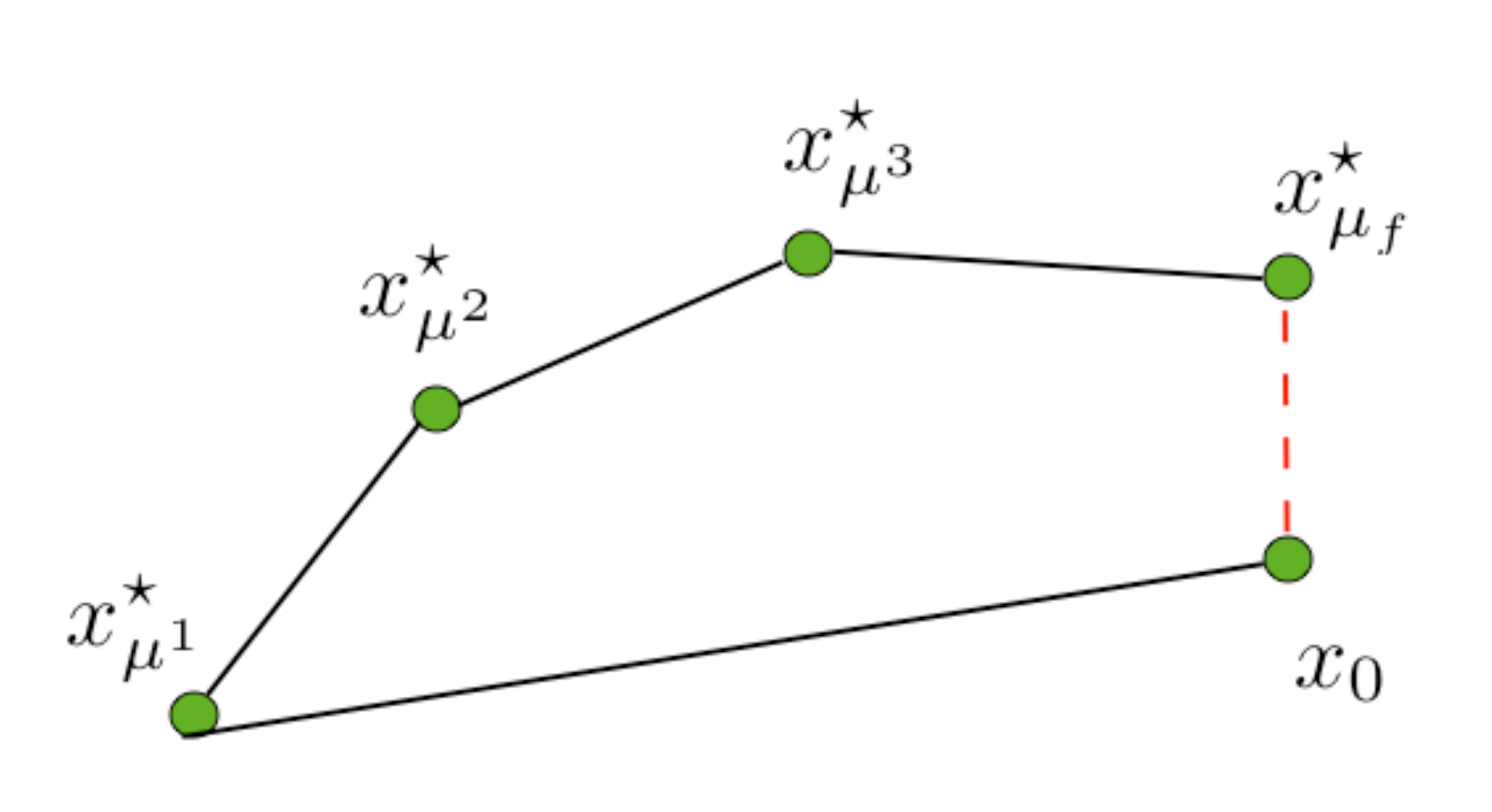}

\caption{Typical solution paths  - \textbf{Left:} smooth solution path. \textbf{Right:} nonsmooth solution path.}
\label{fig:solpaths}
\end{figure}
\end{center}


\section{Accurate Optimization}
\label{sec:new}

A significant fraction of the numerical part of this paper focuses on
comparing different sparse recovery algorithms in terms of speed and
accuracy. In this section, we first demonstrate that \nesta can easily
recover the exact solution to \BPE with a precision of 5 to 6
digits. Speaking of precision, we shall essentially use two criteria
to evaluate accuracy.
\begin{remunerate}
\item The first is the (relative) error on the objective functional
\begin{equation}
\label{eq:relerror}
\frac{\|x\|_{\ell_1} - \|x^\star\|_{\ell_1}}{\|x^\star\|_{\ell_1}},
\end{equation}
where $x^\star$ is the optimal solution to \BPE.
\item The second is the accuracy of the optimal solution itself and is
 measured via 
\begin{equation}
\label{eq:linf}
\|x - x^\star \|_{\ell_\infty},
\end{equation}
which gives a precise value of the accuracy per entry.
\end{remunerate}

\subsection{Is \nesta accurate?\hspace*{-.15cm}}
\label{sec:nestamu}

For general problem instances, the exact solution to \BPE (or
equivalently $(\text{QP}_{\lambda})$) cannot be computed analytically. Under some
conditions, however, a simple formula is available when the optimal
solution has exactly the same support and the same sign as the unknown
(sparse) $x^0$ (recall the model $b = A x^0 +z$). Denote by $I$ the
support of $x^0$, $I := \{i : |x^0[i]| > 0\}$. Then if $x^0$ is
sufficiently sparse and if the nonzero entries of $x^0$ are
sufficiently large, the solution $x^\star$ to \QPL is given by
\begin{align}
\label{eq:optimsol}
x^\star[I] & = (A[I]^* A[I])^{-1}(A[I]^* b - \lambda\, \text{sgn}(x^0[I])),\\
x^\star[{I^c}] & = 0,
\end{align}
see \cite{CP07} for example. In this expression, $x[I]$ is the vector
with indices in $I$ and $A[I]$ is the submatrix with columns indices
in $I$. 

To evaluate \nestansp's accuracy, we set $n = 262,\!144$, $m = n/8$,
and $s = m/100$ (this is the number of nonzero coordinates of $x_0$).
The absolute values of the nonzero entries of $x_0$ are distributed
between $1$ and $10^5$ so that we have about $100$ dB of dynamic
range. 
The measurements $Ax^0$ are discrete cosine coefficients selected
uniformly at random. We add Gaussian white noise with standard
deviation $\sigma = 0.01$. We then compute the solution
\eqref{eq:optimsol}, and make sure it obeys the KKT optimality
conditions for \QPL so that the optimal solution is known.

\begin{table}
\caption{Assessing FISTA's and \nestansp's accuracy when the optimal 
 solution is known. The relative error on the optimal value is given by 
 \eqref{eq:relerror} and the $\ell_\infty$ error on the optimal 
 solution by \eqref{eq:linf}. $\mathcal{N}_A$ is the number of calls 
 to $A$ or $A^*$ to compute the solution.}

\begin{center}\footnotesize
\begin{tabular}{l||c|c|c|c}
  Method & $\ell_1$-norm & Rel. error $\ell_1$-norm & $\ell_\infty$ error &  $\mathcal{N}_A$ \\
  \hline
  $x^\star$ & 3.33601e+6  & &  &   \\
  \hline
  FISTA &  3.33610e+6 & 2.7e-5 & 0.31  & 40000\\
  \nesta $\mu = 0.02$ & 3.33647e+6 & 1.4e-4 & 0.08  & 513 \\
\end{tabular}
\end{center}
\label{Fistaccurate}
\end{table}

We run \nesta with continuation with the value of $\epsilon := \|b -
Ax^\star\|$. We use $\mu = 0.02$, $\delta = 10^{-7}$ and the number of
continuation steps is set to $5$.  Table~\ref{Fistaccurate} reports on
numerical results. First, the value of the objective functional is
accurate up to $4$ digits. Second, the computed solution is very
accurate since we observe an $\ell_\infty$ error of $0.08$. Now recall
that the nonzero components of $x^\star$ vary from about $1$ to $10^5$
so that we have high accuracy over a huge dynamic range.  This can
also be gleaned from Figure~\ref{fig:optvsfista} which plots
\nestansp's solution versus the optimal solution, and confirms the
excellent precision of our algorithm.

\begin{figure}
\begin{centering}
\includegraphics[scale=0.6]{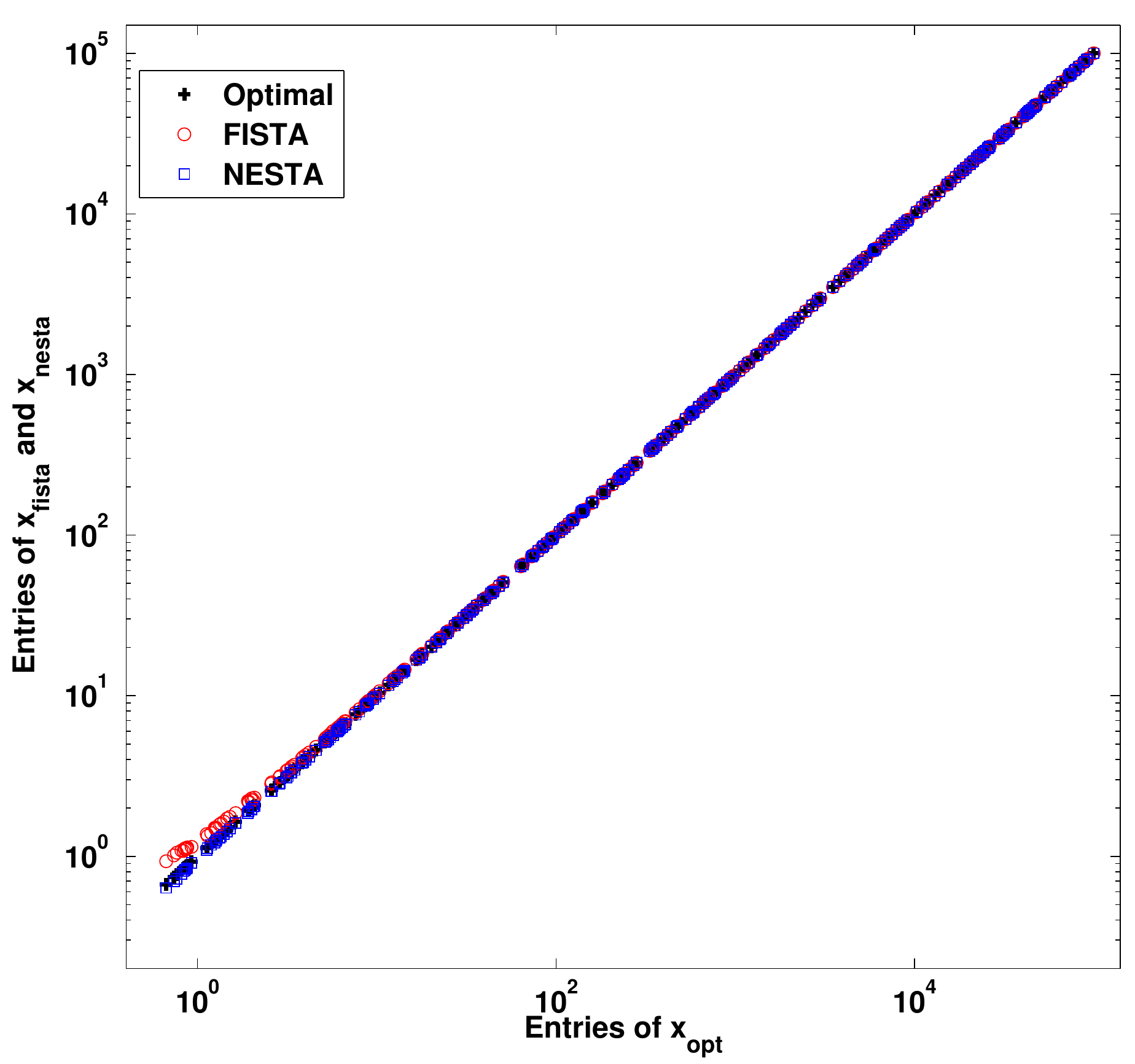}
\caption{Entries of the computed solutions versus the optimal
 solution. The absolute values of the entries on the support of the
 optimal solution are plotted.}
\label{fig:optvsfista}
\end{centering}
\end{figure}

\subsection{Setting up a reference algorithm for accuracy tests}
\label{sec:fistaref}

In general situations, a formula for the optimal solution is of course
unavailable, and evaluating the accuracy of solutions requires
defining a method of reference. In this paper, we will use FISTA
\cite{BeckT08} as such a reference since it is an efficient algorithm
that also turns out to be extremely easy to use; in particular, no
parameter has to be tweaked, except for the standard stopping
criterion (maximum number of iterations and tolerance on the relative
variation of the objective function).

We run FISTA with $20,\!000$ iterations on the same problem as above,
and report its accuracy in Table~\ref{Fistaccurate}. The $\ell_1$-norm
is exact up to $4$ digits. Furthermore, Figure~\ref{fig:optvsfista}
shows the entries of FISTA's solution versus those of the optimal
solution, and one observes a very good fit (near perfect when the
magnitude of a component of $x^\star$ is higher than $3$).  The
$\ell_\infty$ error between FISTA's solution and the optimal solution
$x^\star$ is equal to $0.31$; that is, the entries are exact up to
$\pm 0.31$. Because this occurs over an enormous dynamic range, we
conclude that FISTA also gives very accurate solutions provided that
sufficiently many iterations are taken.  We have observed that running
FISTA with a high number of iterations---typically greater than
$20,\!000$---provides accurate solutions to $(\text{QP}_{\lambda})$, and
this is why we will use it as our method of reference in the
forthcoming comparisons from this section and the next.

\subsection{The smoothing parameter $\mu$ and \nestansp's accuracy}
\label{sec:mu}

By definition, $\mu$ fixes the accuracy of the approximation $f_\mu$
to the $\ell_1$ norm and, therefore, \nestansp's accuracy directly
depends on this parameter. We now propose to assess the accuracy of
\nesta for different values of $\mu$. The problem sizes are as before,
namely, $n = 262,\!144$ and $m = n/8$, except that now the unknown
$x^0$ is far less sparse with $s = m/5$. The standard deviation of the
additive Gaussian white noise is also higher, and we set $\sigma =
0.1$.

Because of the larger value of $s$ and $\sigma$, it is no longer
possible to have an analytic solution from \eqref{eq:optimsol}.
Instead, we use FISTA to compute a reference solution $x_F$, using
$20,\!000$ iterations and with $\lambda = 0.0685$, which gives $\|b -
Ax_F\|^2_{\ell_2} \simeq (m + 2\sqrt{2m}) \sigma^2$.  To be sure that
FISTA's solution is very close to the optimal solution, we check that
the KKT stationarity condition is nearly verified. If $I_\star$ is the
support of the optimal solution $x^\star$, this condition reads
\begin{align*}
 A[I_\star]^*(b - A x^\star) & = \lambda \, \text{sgn}(x^\star[I_\star]),\\
 \|A[I_\star^c]^*(b - A x^\star)\|_{\ell_\infty} & \le \lambda.
\end{align*}
Now define $I$ to be the support of $x_F$. Then, here, $x_F$ obeys
\begin{align*}
 \|A[I]^*(b - A x_F) - \lambda \, \text{sgn}(x_F[I])\|_{\ell_\infty}
 & = 2.66 10^{-10} \lambda,\\
 \|A[I^c]^*(b - A x_F)\|_{\ell_\infty} & \le 0.99 \lambda.
\end{align*}
This shows that $x_F$ is extremely close to the optimal solution.

\begin{figure}
\begin{centering}
\includegraphics[scale=0.6]{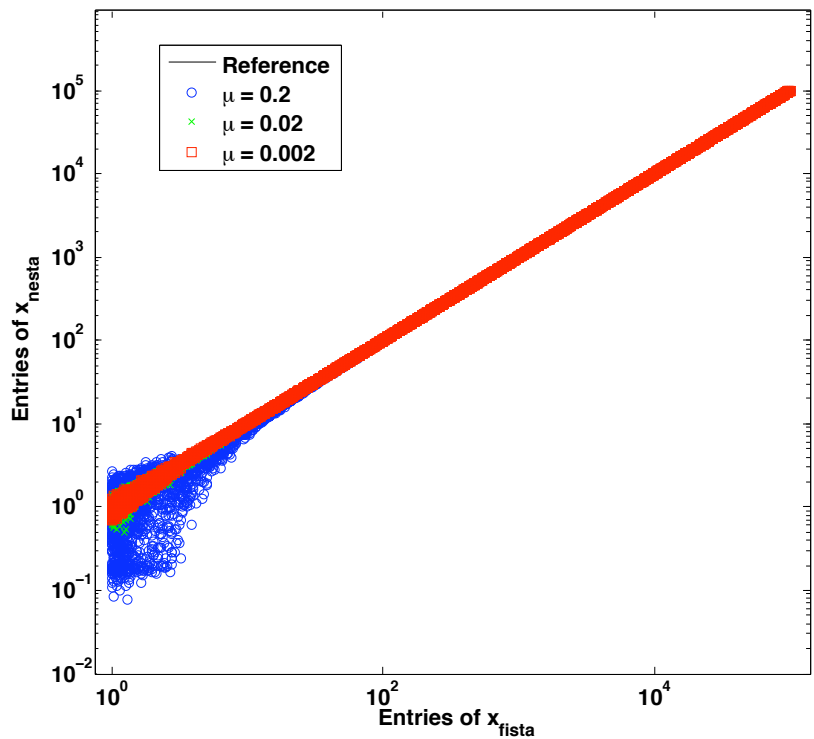}
\caption{Entries of the computed solutions versus the optimal
 solution. We plot the absolute values of the entries on the set
 where the magnitude of the optimal solution exceeds 1.}
\label{fig:nestavsfista}
\end{centering}
\end{figure}

\nesta is run with $T=5$ continuation steps for three different values
of $\mu \in \{0.2, 0.02, 0.002\}$ (the tolerance $\delta$ is set to
$10^{-6}$, $10^{-7}$ and $10^{-8}$ respectively).
Figure~\ref{fig:nestavsfista} plots the solutions given by \nesta
versus the ``optimal solution'' $x_F$. Clearly, when $\mu$ decreases,
the accuracy of \nesta increases just as expected.  More precisely,
notice in Table~\ref{Nestaccurate} that for this particular
experiment, decreasing $\mu$ by a factor of $10$ gives about $1$
additional digit of accuracy on the optimal value. 
\begin{table}
 \caption{\nestansp's accuracy. The errors and number of function calls $\mathcal{N}_A$ have the same meaning as in Table~\ref{Fistaccurate}.} 

\begin{center}\footnotesize
\begin{tabular}{l||c|c|c|c}
 Method & $\ell_1$-norm & Rel. error $\ell_1$-norm & $\ell_\infty$ error & $\mathcal{N}_A$ \\
 \hline
 FISTA & 5.71539e+7 & &  &  \\
 \hline
 \nesta $\mu = 0.2$ &  5.71614e+7 & 1.3e-4 & 3.8 & 659\\
 \nesta $\mu = 0.02$ & 5.71547e+7 & 1.4e-5 & 0.96  & 1055\\
 \nesta $\mu = 0.002$ & 5.71540e+7 & 1.6e-6 & 0.64   & 1537
\end{tabular}
\end{center}
\label{Nestaccurate}
\end{table}

According to this table, $\mu =0.02$ seems a reasonable choice to
guarantee an accurate solution since one has between $4$ and $5$
digits of accuracy on the optimal value, and since the $\ell_\infty$
error is lower than $1$. Observe that this value separates the nonzero
entries from the noise floor (when $\sigma = 0.01$). In the extensive
numerical experiments of Section~\ref{sec:numeric}, we shall set $\mu
= 0.02$ and $\delta = 10^{-7}$ as default values.


\section{Numerical comparisons}
\label{sec:numeric}

This section presents numerical experiments comparing several
state-of-the-art optimization techniques designed to solve
\eqref{eq:bp} or \eqref{eq:pls}.  To be as fair as possible, we
propose comparisons with methods for which software is publicly
available online.  To the best of our knowledge, such extensive
comparisons are currently unavailable. Moreover, whereas publications
sometimes test algorithms on relatively easy and academic problems, we
will subject optimization methods to hard but realistic $\ell_1$
reconstruction problems.

In our view, a challenging problem involves some or all of the
characteristics below. 
\begin{remunerate}
\item {\em High dynamic range.} As mentioned earlier, most
optimization techniques are able to find (more or less rapidly) the
most significant entries (those with a large amplitude) of the
signal $x$. Recovering the entries of $x$ that have low magnitudes
accurately is more challenging.

\item {\em Approximate sparsity.} Realistic signals are seldom exactly
sparse and, therefore, coping with approximately sparse signals is
of paramount importance. In signal or image processing for example,
wavelet coefficients of natural images contain lots of low level
entries that are worth retrieving.

\item {\em Large scale.} Some standard optimization techniques, such
as interior point methods, are known to provide accurate
solutions. However, these techniques are not applicable to
large-scale problems due to the large cost of solving linear
systems. Further, many existing software packages fail to take
advantage of fast-algorithms for applying $A$. We will focus on
large-scale problems in which the number of unknowns $n$ is over
a quarter of a million, i.e.~$n = 262,\!144$.
\end{remunerate}

\subsection{State-of-the-art methods}

Most of the algorithms discussed in this section are considered to be
state-of-art in the sense that they are the most competitive among
sparse reconstruction algorithms.  To repeat ourselves, many of these
methods have been improved after several years of research
\cite{l1ls,ist:fnw}, and many did not exist two years ago
\cite{FPC,SPGL}. For instance, \cite{FPC2} was submitted for
publication less than three months before we put the final touches on
this paper. Finally, our focus is on rapid algorithms so that we are
interested in methods which can take advantage of fast algorithms for
applying $A$ to a vector. This is why we have not tested other good
methods such as \cite{CoordinateDescent}, for example.

\subsubsection{\nestansp}

Below, we applied \nesta with the following default parameters
\[
x_0 = A^*b, \quad \mu = 0.02, \quad \delta = 10^{-7}
\]
(recall that $x_0$ is the initial guess).  The maximal number of
iterations is set to $\mathcal{I}_{\max} = 10,\!000$; if convergence
is not reached after $\mathcal{I}_{\max}$ iterations, we record that
the algorithm did not convergence (DNC). Because \nesta requires 2
calls to either $A$ or $A^*$ per iteration, this is equivalent to
declaring DNC after $\mathcal{N}_A = 20,\!000$ iterations where
$\mathcal{N}_A$ refers to the total number of calls to $A$ or $A^*$;
hence, for the other methods, we declare DNC when $\mathcal{N}_A >
20,\!000$.  When continuation is used, extra parameters are set up as
follows:
\[
\CtSteps  =   4,   \quad 
\mu_0  =  \|x_0\|_{\ell_\infty}, \quad   
\gamma  =  (\mu/\mu_0)^{1/\CtSteps},
\]
and for $t = 1,\ldots, T$, 
\[
\mu_t = \gamma^t \mu_0,\quad
\delta_t  =  0.1\cdot(\delta/0.1)^{t/\CtSteps}.
\]
Numerical results are reported and discussed in
Section~\ref{sec:numreshd}.

\subsubsection{Gradient Projections for Sparse Reconstruction (GPSR)
\cite{ist:fnw}}

GPSR has been introduced in \cite{ist:fnw} to solve the standard
$\ell_1$ minimization problem in Lagrangian form
($\text{QP}_\lambda$). GPSR is based on the well-known projected
gradient step technique,
\[
v^{(k+1)} = \mathcal{P}_{\mathcal{Q}}\left(v^{(k-1)} -\alpha_k \nabla
F(v_k)\right),
\]
for some projector $\mathcal{P}_{\mathcal{Q}}$ onto a convex set
$\mathcal{Q}$; this set contains the variable of interest $v$. In this
equation, $F$ is the function to be minimized. In GPSR, the problem is
recast such that the variable $v = [v_1,v_2]$ has positive entries and
$x = v_1 - v_2$ (a standard change of variables in linear programming
methods). The function $F$ is then
\[
F(v) = \lambda \, \underline{1}^* v +\frac{1}{2}\|b - [A , -A] v
\|_{\ell_2}^2, 
\]
where $\underline{1}$ is the vector of ones, and $v$ belongs to the
nonnegative orthant, $v[i] \geq 0$ for all $i$. The projection onto
$\mathcal{Q}$ is then trivial.  Different techniques for choosing the
step-size $\alpha_k$ (backtracking, Barzilai-Borwein \cite{BB88}, and
so on) are discussed in \cite{ist:fnw}. The code is available at
\url{http://www.lx.it.pt/~mtf/GPSR/}. In the forthcoming experiments,
the parameters are set to their default values.

GPSR also implements continuation, and we test this version as well.
All parameters were set to defaults except, per the recommendation of
one of the GPSR authors to increase performance, the number of
continuation steps was set to 40, the {\tt ToleranceA} variable was
set to $10^{-3}$, and the {\tt MiniterA} variable was set to $1$.  In
addition, the code itself was tweaked a bit; in particular, the
stopping criteria for continuation steps (other than the final step)
was changed.  Future releases of GPSR will probably contain a
similarly updated continuation stopping criteria.

\subsubsection{Sparse reconstruction by separable approximation
  (SpaRSA) \cite{sparsa}}

SpaRSA is an algorithm to minimize composite functions $\phi(x) = f(x)
+ \lambda c(x)$ composed of a smooth term $f$ and a separable
non-smooth term $c$, e.g.~($\text{QP}_\lambda$).  At every step, a
subproblem of the form
\[
\text{minimize} \quad \|x-y\|_{\ell_2}^2 + \frac{\lambda}{\alpha} c(x)
\]
with optimization variable $x$ must be solved; this is the same as
computing the proximity operator corresponding to $c$.  For
($\text{QP}_\lambda$), the solution is given by shrinkage.  In this
sense, SpaRSA is an iterative shrinkage/thresholding (IST) algorithm,
much like FISTA (though without the accelerated convergence) and FPC.
Also like FPC, continuation is used to speed convergence, and like
FPC-BB, a Barzilai-Borwein heuristic is used for the step size
$\alpha$ (instead of using a pessimistic bound like the Lipschitz
constant).  With this choice, SpaRSA is not guaranteed to be monotone,
which can be remedied by implementing an appropriate safeguard,
although this is not done in practice because there is little
experimental advantage to doing so.  Code for SpaRSA may be obtained
at \url{http://www.lx.it.pt/~mtf/SpaRSA/}.  Parameters were set to
default except the number of continuation steps was set to 40 and the
{\tt MiniterA} variable was set to 1 (instead of the default 5), as
per the recommendations of one of the SpaRSA authors---again, as to
increase performance.

\subsubsection{$\ell_1$ regularized least squares (l1\_ls) \cite{l1ls}}

This method solves the standard unconstrained $\ell_1$ minimization
problem, and is an interior point method (with log-barrier) using
preconditioned conjugate gradient (PCG) to accelerate convergence and
stabilize the algorithm. The preconditioner used in the PCG step is a
linear combination of the diagonal approximation of the Hessian of the
quadratic term and of the Hessian of the log-barrier term.  l1\_ls is
shown to be faster than usual interior point methods; nevertheless,
each step requires solving a linear system of the form $H \Delta x =
g$. Even if PCG makes the method more reliable, l1\_ls is still
problematic for large-scale problems. In the next comparisons, we
provide some typical values of its computational complexity compared
to the other methods. The code is available at \url{http://www.stanford.edu/~boyd/l1_ls/}.

\subsubsection{Spectral projected gradient (SPGL1) \cite{SPGL}}

In 2008, van den Berg et al.~adapted the spectral projection gradient
algorithm introduced in \cite{Birg00} to solve the LASSO
($\text{LS}_\tau$). Interestingly, they introduced a clever root finding
procedure such that solving a few instances of \LST for different
values of $\tau$ enables them to equivalently solve
($\text{BP}_\epsilon$).  Furthermore, if the algorithm
detects a nearly-sparse solution, it defines an active set and solves
an equation like \eqref{eq:optimsol} on this active set.
In the next experiments, the parameters are
set to their default values. The code is available at
\url{http://www.cs.ubc.ca/labs/scl/SPGL11/}.

\subsubsection{Fixed Point Continuation method (FPC) \cite{FPC,FPC2}}

The Fixed Point Continuation method is a recent first-order algorithm
for solving \QPL and simple generalizations of \QPL\!.  The main idea
is based on a fixed point equation, $ x = F(x) $, which holds at the
solution (derived from the subgradient optimality condition).  For
appropriate parameters, $F$ is a contraction, and thus the algorithm $
x_{k+1} = F( x_k )$ converges.  The operator $F$ comes from
forward-backward splitting, and consists of a
soft-thresholding/shrinkage step and a gradient step.  The main
computational burden is one application of $A$ and $A^*$ at every
step.  The papers \cite{FPC,FPC2} prove $q$-linear convergence, and
finite-convergence of some of the components of $x$ for $s$-sparse
signals.  The parameter $\lambda$ in \QPL determines the amount of
shrinkage and, therefore, the speed of convergence; thus in practice,
$\lambda$ is decreased in a continuation scheme.  Code for FPC is
available at
\url{http://www.caam.rice.edu/~optimization/L1/fpc/}. Also available
is a state-of-the-art version of FPC from 2008 that uses
Barzilai-Borwein \cite{BB88} steps to accelerate performance.  In the
numerical tests, the Barzilai-Borwein version (referred to as FPC-BB)
significantly outperforms standard FPC.  All parameters were set to
default values.

\subsubsection{FPC Active Set (FPC-AS) \cite{FPCAS}}

In 2009, inspired by both first-order algorithms, such as FPC, and
greedy algorithms \cite{donoho:stgomp,CoSaMP}, Wen et al.~\cite{FPCAS}
extend FPC into the two-part algorithm FPC Active Set to solve
\QPL\!\!.  In the first stage, FPC-AS calls an improved version of FPC
that allows the step-size to be updated dynamically, using a
non-monotone exact line search to ensure $r$-linear convergence, and a
Barzilai-Borwein \cite{BB88} heuristic.  After a given stopping
criterion, the current value, $x_k$, is hard-thresholded to determine
an active set.  On the active set, $ \|x\|_{\ell_1}$ is replaced by
$c^* x$, where $c = \text{sgn}(x_k)$, with the constraints that
$x[i]\cdot c[i] > 0$ for all the indices $i$ belonging to the active
set.  The objective is now smooth, and solvers, like conjugate
gradients (CG) or quasi-Newton methods (e.g.~L-BFGS or L-BFGS-B), can
solve for $x$ on the active set; this the same as solving
\eqref{eq:optimsol}.  This two-step process is then repeated for a
smaller value of $\lambda$ in a continuation scheme.  We tested FPC-AS
using both L-BFGS (the default) and CG (which we refer to as
FPC-AS-CG) to solve the subproblem; both of these solvers do not
actually enforce the $x[i]\cdot c[i] > 0$ constraint on the active
set.  Code for FPC-AS is available at
\url{http://www.caam.rice.edu/~optimization/L1/FPC_AS/}.

For $s$-sparse signals, all parameters were set to defaults except for the
stopping criteria (as discussed in Section \ref{sec:protocol}).  For approximately sparse signals, FPC-AS
performed poorly ($ > 10,000$ iterations) with the default
parameters.  By changing a parameter that controls the {\em estimated}
number of nonzeros from $m/2$ (default) to $n$, the performance
improved dramatically, and this is the performance reported in the
tables.  The maximum number of subspace iterations was also changed from 
the default to 10, as recommended in the help file.

\subsubsection{Bregman}

The Bregman Iterative algorithm, motivated by the Bregman distance, has
been shown to be surprisingly simple \cite{YinCS08}.  The first
iteration solves \QPL for a specified value of $\lambda$; subsequent
iterations solve \QPL for the same value of $\lambda$, with an updated
observation vector $b$.  Typically, only a few outer iterations are
needed (e.g.~4), but each iteration requires a solve of
($\text{QP}_\lambda$), which is costly.  The original Bregman
algorithm calls FPC to solve these subproblems; we test Bregman using
FPC and the Barzilai-Borwein version of FPC as subproblem solvers.

A version of the Bregman algorithm, known as the Linearized Bregman
algorithm \cite{linBreg,COS:XXX:08}, takes only one step of the inner
iteration per outer iteration; consequently, many outer iterations are
taken, in contrast to the regular Bregman algorithm. It can be shown
that linearized Bregman is equivalent to gradient ascent on the dual
problem.  Linearized Bregman was not included in the tests because no
standardized public code is available.  Code for the regular Bregman
algorithm may be obtained at
\url{http://www.caam.rice.edu/~optimization/L1/2006/10/bregman-iterative-algorithms-for.html}.
There are quite a few parameters, since there are parameters for the outer
iterations and for the inner (FPC) iterations; for all experiments,
parameters were set to defaults.  In particular, we noted that using the default
stopping criteria for the inner solve, which limited FPC to $1,\!000$ iterations, led to significantly
better results than allowing the subproblem to run to $10,\!000$ iterations.

\subsubsection{Fast Iterative Soft-Thresholding Algorithm (FISTA)} 
\label{sec:fistadescription}

FISTA is based upon Nesterov's work but departs from \nesta in two
important ways: 1) FISTA solves the sparse unconstrained
reconstruction problem ($\text{QP}_\lambda$); 2) FISTA is a proximal
subgradient algorithm, which only uses two sequences of iterates. In
some sense, FISTA is a simplified version of the algorithm previously
introduced by Nesterov to minimize composite functions
\cite{NestCompo}. The theoretical rate of convergence of FISTA is similar to
\nestansp's, and has been shown to decay as $\mathcal{O}(1/k^2)$.

For each test, FISTA is run twice: it is first run until the relative
variation in the function value is less than $10^{-14}$, with no limit on
function calls, and this solution is used as the reference solution.
It is then run a second time using either Criteria 1 or Criteria 2
as the stopping condition, and these are the results reported
in the tables.

\subsection{Constrained versus unconstrained minimization}
\label{sec:constrained}

We would like to briefly highlight the fact that these algorithms are
not solving the same problem.  \nesta and SPGL1 solve the constrained
problem ($\text{BP}_{\epsilon}$), while all other methods tested solve
the unconstrained problem ($\text{QP}_\lambda$).  As the first chapter
of any optimization book will emphasize, solving an unconstrained
problem is in general much easier than a constrained
problem.\footnote{The constrained problem ($\text{BP}_{\epsilon}$) is
  equivalent to that of minimizing $\|x\|_{\ell_1} +
  \chi_{\mathcal{Q}_p}(x)$ where $\mathcal{Q}_p$ is the feasible set
  $\{x : \|Ax - b \|_{\ell_2} \le \epsilon\}$, and
  $\chi_{\mathcal{Q}_p}(x) = 0$ if $x \in \mathcal{Q}_p$ and $+\infty$
  otherwise. Hence, the unconstrained problem has a discontinuous
  objective functional.} For example, it may be hard to even find a
feasible point for ($\text{BP}_\epsilon$), since the pseudo-inverse of
$A$, when $A$ is not a projection, may be difficult to compute.  It is
possible to solve a sequence of unconstrained
($\text{QP}_{\lambda_j}$) problems for various $\lambda_j$ to
approximately find a value of the dual variable $\lambda$ that leads
to equivalence with ($\text{BP}_\epsilon$), but even if this procedure
is integrated with the continuation procedure, it will require
several, if not dozens, of solves of \QPL (and this will in general
only lead to an approximate solution to ($\text{BP}_{\epsilon}$)).
The Newton-based root finding method of SPGL1 relies on solving a
sequence of {\em constrained} problems ($\text{LS}_\tau$); basically,
the dual solution to a constrained problem gives useful information.

Thus, we emphasize that SPGL1 and \nesta are actually more general
than the other algorithms (and as Section~\ref{sec:flexible} shows,
\nesta is even more general because it handles a wide variety of
constrained problems); this is especially important because from a
practical viewpoint, it may be easier to estimate an appropriate
$\epsilon$ than an appropriate value of $\lambda$.  Furthermore, as
will be shown in Section~\ref{sec:numreshd}, SPGL1 and \nesta with
continuation are also the most robust methods for arbitrary signals
(i.e.~they perform well even when the signal is not exactly sparse,
and even when it has high dynamic range).  Combining these two facts,
we feel that these two algorithms are extremely useful for real-world
applications.

\subsection{Experimental protocol}\label{sec:protocol}

In these experiments, we compare \nesta with other efficient methods.
There are two main difficulties with comparisons which might explain
why broad comparisons have not been offered before.  The first problem
is that some algorithms, such as \nestansp, solve
($\text{BP}_{\epsilon}$), whereas other algorithms solve
($\text{QP}_\lambda$).  Given $\epsilon$, it is difficult to compute
$\lambda(\epsilon)$ that gives an equivalence between the problems; in
theory, the KKT conditions give $\lambda$, but we have observed in
practice that because we have an approximate solution (albeit a very
accurate one), computing $\lambda$ in this fashion is not stable.

Instead, we note that given $\lambda$ and a solution $x_\lambda$ to
($\text{QP}_\lambda$), it is easy to compute a very accurate
$\epsilon(\lambda)$ since $\epsilon = \| A x_\lambda - b \|_{\ell_2}$.
Hence, we use a two-step procedure.  In the first step, we choose a
value of $\epsilon_0= \sqrt{m + 2\sqrt{2m}} \sigma$ based on the noise
level $\sigma$ (since a value of $\lambda$ that corresponds to
$\sigma$ is less clear), and use SPGL1 to solve
($\text{BP}_\epsilon$).  From the SPGL1 dual solution, we have an
estimate of $\lambda = \lambda(\epsilon_0)$.  As noted above, this
equivalence may not be very accurate, so the second step is to compute
$\epsilon_1 = \epsilon(\lambda)$ via FISTA, using a very high accuracy
of $\delta=10^{-14}$.  The pair $(\lambda,\epsilon_1)$ now leads to
nearly equivalent solutions of ($\text{QP}_\lambda$) and
($\text{BP}_\epsilon$).  The solution from FISTA will also
be used to judge the accuracy of the other algorithms.

The other main difficulty in comparisons is a fair stopping criterion.
Each algorithm has its own stopping criterion (or may offer a choice
of stopping criteria), and these are not directly comparable.  To
overcome this difficulty, we have modified the codes of the algorithms
to allow for two new stopping criterion that we feel are the only fair choices.
The short story is that we use \nesta to compute a solution $x_N$ and
then ask the other algorithms to compute a solution that is at least
as accurate.

Specifically, given \nestansp's solution $x_N$ (using continuation),
the other algorithms terminate at iteration $k$ when the solution
$\hat{x}_k$ satisfies
\begin{equation}
\label{stopcond}
\text{(Crit.~1)} \quad 
\|\hat{x}_k\|_{\ell_1} \le  \|x_N\|_{\ell_1} \quad \text{and} \quad 
\|b - A\hat{x}_k\|_{\ell_2} \le 1.05 \, \|b - Ax_N\|_{\ell_2},
\end{equation}
or
\begin{equation}
\label{stopcond3}
 \text{(Crit.~2)}
 \quad 
 \lambda \|\hat{x}_{k}\|_{\ell_1}  + \frac{1}{2} \|A \hat{x}_k -b\|_{\ell_2}^2
 \leq \lambda \|x_N\|_{\ell_1}  + \frac{1}{2} \|Ax_N -b\|_{\ell_2}^2. 
\end{equation}
We run tests with both stopping criteria to reduce any potential bias
from the fact that some algorithms solve ($\text{QP}_\lambda$), for
which Crit.~2 is the most natural, while others solve
($\text{BP}_\epsilon$), for which Crit.~1 is the most natural.  In
practice, the results when applying Crit.~1 or Crit.~2 are not
significantly different.

%
%
%

\subsection{Numerical results}
\label{sec:numreshd}

\subsubsection{The case of exactly sparse signals}

This first series of experiments tests all the algorithms discussed
above in the case where the unknown signal is $s$-sparse with $s =
m/5$, $m = n/8$, and $n=262,\!144$.  This situation is close to the
limit of perfect recovery from noiseless data.  The $s$ nonzero
entries of the signals $x^0$ are generated as described in
\eqref{eq:entriesmod}.  Reconstruction is performed with several
values of the dynamic range $d = 20, 40, 60, 80, 100$ in dB.  The
measurement operator is a randomly subsampled discrete cosine
transform, as in Section~\ref{sec:nestamu} (with a different random
set of measurements chosen for each trial).  The noise level is set to
$\sigma = 0.1$.  The results are reported in Tables \ref{table2}
(Crit.~1) and \ref{table1} (Crit.~2); each cell in these table
contains the mean value of $\mathcal{N}_{A}$ (the number of calls of
$A$ or $A^*$) over $10$ random trials, and, in smaller font, the
minimum and maximum value of $\mathcal{N}_{A}$ over the $10$ trials.
When convergence is not reached after $\mathcal{N}_A = 20,\!000$, we
report $\mbox{DNC}$ (did not converge).  As expected, the number of
calls needed to reach convergence varies a lot from an algorithm to
another.

\begin{table}
\centering
 \caption{Number of function calls $\mathcal{N}_A$ averaged over 10 independent
   runs. The sparsity level $s = m/5$ and the stopping rule is Crit.~1 \eqref{stopcond}.}

\footnotesize
\begin{tabular*}{0.99\textwidth} {@{\extracolsep{\fill}}l||c|c|c|c|c}
  Method & 20 dB & 40 dB & 60 dB & 80 dB & 100 dB \\
   \hline
       \nesta     & 446 \mmax{351}{491}   & 880 \mmax{719}{951}   & 1701 \mmax{1581}{1777}        & 4528 \mmax{4031}{4749}        & 14647 \mmax{7729}{15991}\\ 
       \nesta + Ct     & 479 \mmax{475}{485}   & 551 \mmax{539}{559}   & 605 \mmax{589}{619}   & 658 \mmax{635}{679}   & 685 \mmax{657}{705}\\ 
       GPSR     & 56 \mmax{44}{62}      & 733 \mmax{680}{788}   & 5320 \mmax{4818}{5628}        & \text{DNC}    & \text{DNC}\\ 
       GPSR + Ct	& 305 \mmax{293}{311}	& 251 \mmax{245}{257}	& 497 \mmax{453}{531}	& 1816 \mmax{1303}{2069}	& 9101 \mmax{7221}{10761}\\ 
       SpaRSA	& 345 \mmax{327}{373}	& 455 \mmax{435}{469}	& 542 \mmax{511}{579}	& 601 \mmax{563}{629}	& 708 \mmax{667}{819}\\
       SPGL1     & 54 \mmax{37}{61}      & 128 \mmax{102}{142}   & 209 \mmax{190}{216}   & 354 \mmax{297}{561}   & 465 \mmax{380}{562}\\ 
       FISTA     & 68 \mmax{66}{69}      & 270 \mmax{261}{279}   & 935 \mmax{885}{969}   & 3410 \mmax{2961}{3594}        & 13164 \mmax{11961}{13911}\\ 
       FPC AS     & 156 \mmax{111}{177}   & 236 \mmax{157}{263}   & 218 \mmax{215}{239}   & 351 \mmax{247}{457}   & 325 \mmax{313}{335}\\
       FPC AS (CG)     & 312 \mmax{212}{359}   & 475 \mmax{301}{538}   & 434 \mmax{423}{481}   & 641 \mmax{470}{812}   & 583 \mmax{567}{595}\\ 
       FPC     & 414 \mmax{394}{436}   & 417 \mmax{408}{422}   & 571 \mmax{546}{594}   & 945 \mmax{852}{1038}  & 3945 \mmax{2018}{4734}\\ 
       FPC-BB     & 148 \mmax{140}{152}   & 166 \mmax{158}{168}   & 219 \mmax{208}{250}   & 264 \mmax{252}{282}   & 520 \mmax{320}{800}\\
       Bregman-BB     & 211 \mmax{203}{225}   & 270 \mmax{257}{295}   & 364 \mmax{355}{393}   & 470 \mmax{429}{501}   & 572 \mmax{521}{657}
\end{tabular*}
\label{table2}
\end{table}

\begin{table}
\centering
 \caption{Number of function calls $\mathcal{N}_A$ averaged over 10 independent
   runs. The sparsity level $s = m/5$ and the stopping rule is Crit.~2 \eqref{stopcond3}.}

\footnotesize
\begin{tabular*}{0.99\textwidth} {@{\extracolsep{\fill}}l||c|c|c|c|c}
  Method & 20 dB & 40 dB & 60 dB & 80 dB & 100 dB \\
   \hline
   \nesta     & 446 \mmax{351}{491}   & 880 \mmax{719}{951}   & 1701 \mmax{1581}{1777}        & 4528 \mmax{4031}{4749}        & 14647 \mmax{7729}{15991}\\ 
   \nesta + Ct     & 479 \mmax{475}{485}   & 551 \mmax{539}{559}   & 605 \mmax{589}{619}   & 658 \mmax{635}{679}   & 685 \mmax{657}{705}\\ 
   GPSR     & 59 \mmax{44}{64}      & 736 \mmax{678}{790}   & 5316 \mmax{4814}{5630}        & \text{DNC}     & \text{DNC}\\ 
   GPSR + Ct	& 305 \mmax{293}{311}	& 251 \mmax{245}{257}	& 511 \mmax{467}{543}	& 1837 \mmax{1323}{2091}	& 9127 \mmax{7251}{10789}\\ 
   SpaRSA	& 345 \mmax{327}{373}	& 455 \mmax{435}{469}	& 541 \mmax{509}{579}	& 600 \mmax{561}{629}	& 706 \mmax{667}{819}\\
   SPGL1     & 55 \mmax{37}{61}      & 138 \mmax{113}{152}   & 217 \mmax{196}{233}   & 358 \mmax{300}{576}   & 470 \mmax{383}{568}\\ 
   FISTA     & 65 \mmax{63}{66}      & 288 \mmax{279}{297}   & 932 \mmax{882}{966}   & 3407 \mmax{2961}{3591}        & 13160 \mmax{11955}{13908}\\ 
   FPC AS     & 176 \mmax{169}{183}   & 236 \mmax{157}{263}   & 218 \mmax{215}{239}   & 344 \mmax{247}{459}   & 330 \mmax{319}{339}\\ 
   FPC AS (CG)     & 357 \mmax{343}{371}   & 475 \mmax{301}{538}   & 434 \mmax{423}{481}   & 622 \mmax{435}{814}   & 588 \mmax{573}{599}\\ 
   FPC     & 416 \mmax{398}{438}   & 435 \mmax{418}{446}   & 577 \mmax{558}{600}   & 899 \mmax{788}{962}   & 3866 \mmax{1938}{4648}\\ 
   FPC-BB     & 149 \mmax{140}{154}   & 172 \mmax{164}{174}   & 217 \mmax{208}{254}   & 262 \mmax{248}{286}   & 512 \mmax{308}{790}\\ 
   Bregman-BB     & 211 \mmax{203}{225}   & 270 \mmax{257}{295}   & 364 \mmax{355}{393}   & 470 \mmax{429}{501}   & 572 \mmax{521}{657}
\end{tabular*}
\label{table1}
\end{table}

The careful reader will notice that Tables~\ref{table2} and
\ref{table1} do not feature the results provided by l1\_ls; indeed,
while it seems faster than other interior point methods, it is still
far from being comparable to the other algorithms reviewed here. In
these experiments l1\_ls typically needed $1500$ calls to $A$ or $A^*$
for reconstructing a $20$ dB signal with $s= m/100$ nonzero
entries. For solving the same problem with a dynamic range of $100$
dB, it took $~5$ hours to converge on a dual core MacPro G5 clocked at
2.7GHz.

GPSR performs well in the case of low-dynamic range signals; its
performance, however, decreases dramatically as the dynamic range
increases; Table \ref{table1} shows that it does not converge for 80
and 100 dB signals.  GPSR with continuation does worse on the low
dynamic range signals (which is not surprising).  It does much better
than the regular GPSR version on the high dynamic range signals,
though it is slower than \nesta with continuation by more than a
factor of 10.  SpaRSA performs well at low dynamic range, comparable
to \nestansp, and begins to outperform GSPR with continuation as the
dynamic range increases, although it begins to underperform \nesta
with continuation in this regime. SpaRSA takes over twice as many
function calls on the 100 dB signal as on the 20 dB signal.

SPGL1 shows good performance with very sparse signals and low dynamic
range.  Although it has fewer iteration counts than \nestansp, the
performance decreases much more quickly than for \nesta as the
dynamic range increases; SPGL1 requires about 9$\times$ more calls to
$A$ at 100 dB than at 20 dB, whereas \nesta with continuation requires
only about 1.5$\times$ more calls.  FISTA is almost as fast as SPGL1
on the low dynamic range signal, but degrades very quickly as the
dynamic range increases, taking about 200$\times$ more iterations at
100 dB than at 20 dB.  One large contributing factor to this poor
performance at high dynamic range is the lack of a continuation
scheme.

FPC performs well at low dynamic range, but is very slow on 100 dB
signals.  The Barzilai-Borwein version was consistently faster than
the regular version, but also degrades much faster than \nesta with
continuation as the dynamic range increases. Both FPC Active Set and
the Bregman algorithm perform well at all dynamic ranges, but again,
degrade faster than \nesta with continuation as the dynamic range
increases.  There is a slight difference between the two FPC Active
set versions (using L-BFGS or CG), but the dependence on the dynamic
range is roughly similar.

The performances of \nesta with continuation are reasonable when the
dynamic range is low. When the dynamic range increases, continuation
helps by dividing the number of calls up to a factor about $20$, as in
the 100 dB case. In these experiments, the tolerance $\delta$ is
consistently equal to $10^{-7}$; while this choice is reasonable when
the dynamic range is high, it seems too conservative in the low
dynamic range case. Setting a lower value of $\delta$ should improve
\nestansp's performance in this regime. In other words, \nesta with
continuation might be tweaked to run faster on the low dynamic range
signals. However, this is not in the spirit of this paper and this is
why we have not researched further refinements.

In summary, for exactly sparse signals exhibiting a significant
dynamic range, 1) the performance of \nesta with continuation---but
otherwise applied out-of-the-box---is comparable to that of
state-of-the-art algorithms, and 2) most state-of-the-art algorithms
are efficient on these types of signals.

\subsubsection{Approximately sparse signals}
\label{sec:numresap}

We now turn our attention to approximately sparse signals. Such
signals are generated via a permutation of the Haar wavelet
coefficients of a $512 \times 512$ natural image. The data $b$ are $m
= n/8 = 32,\!768$ discrete cosine measurements selected at random.
White Gaussian noise with standard deviation $\sigma = 0.1$ is then
added.  Each test is repeated 5 times, using a different random
permutation every time (as well as a new instance of the noise
vector).  Unlike in the exactly sparse case, the wavelet coefficients
of natural images mostly contain mid-range and low level coefficients
(see Figure~\ref{fig:wtcoeffs}) which are challenging to recover.

\begin{figure}
\begin{centering}
\includegraphics[scale=0.6]{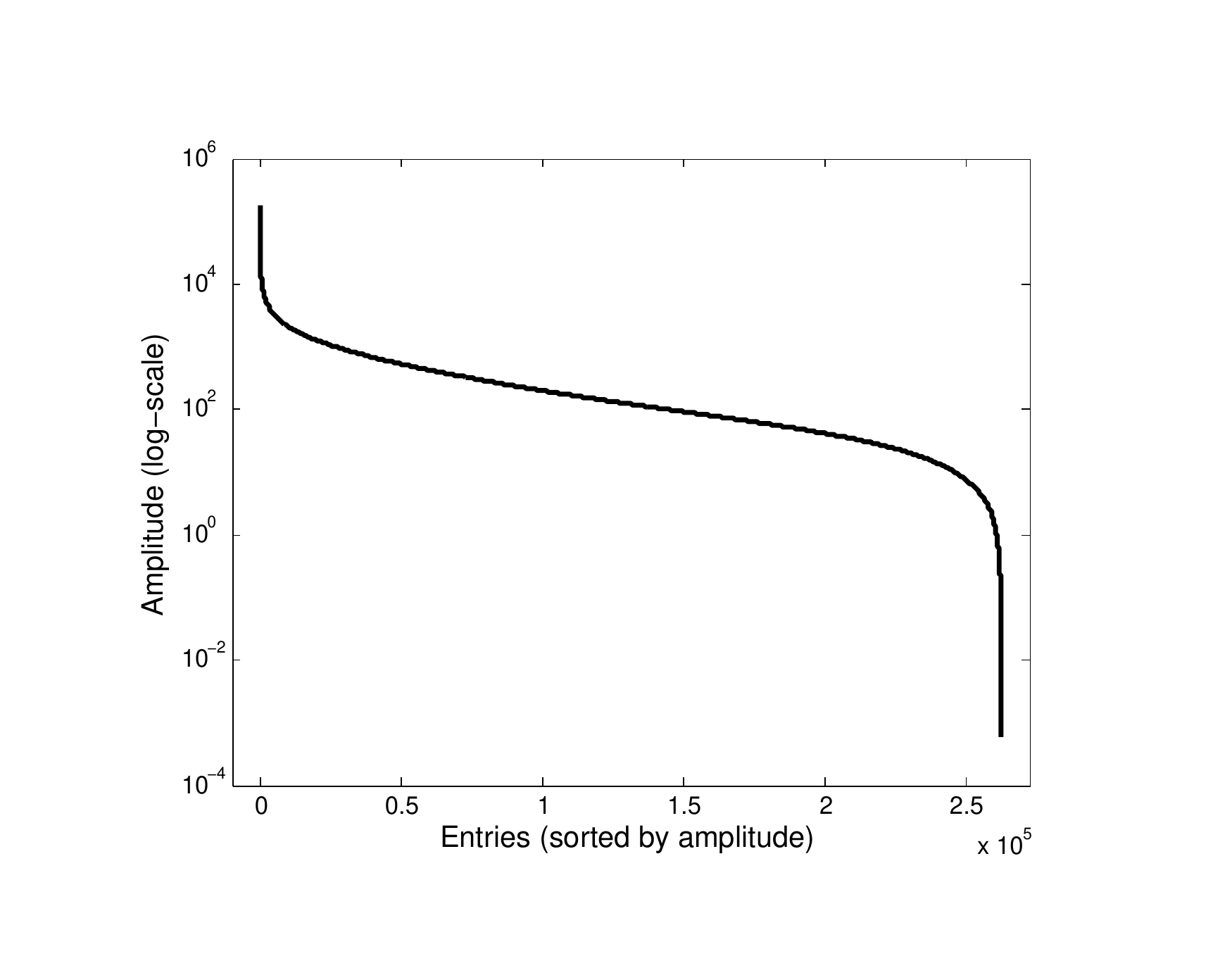}
\caption{Sorted wavelet coefficients of the natural image used in the experiments.}
\label{fig:wtcoeffs}
\end{centering}
\end{figure}

The results are reported in Tables~\ref{table12} (Crit.~1) and
\ref{table14} (Crit.~2); the results from applying the two stopping
criteria are nearly identical.  In these series of experiments, the
performance of SPGL1 is quite good but seems to vary a lot from one
trial to another (Table~\ref{table14}). Notice that the concept of an
active-set is ill defined in the approximately sparse case; as a
consequence, the active set version of FPC is not much of an
improvement over the regular FPC version.  FPC is very fast for
$s$-sparse signals but lacks the robustness to deal with less ideal
situations in which the unknown is only approximately sparse.

FISTA and SpaRSA converge for these tests, but are not competitive
with the best methods.  It is reasonable to assume that FISTA would
also improve if implemented with continuation.  SpaRSA already uses
continuation but does not match its excellent performance on exactly
sparse signals. 

Bregman, SPGL1, and \nesta with continuation all have excellent
performances (continuation really helps NESTA) in this series of
experiments.  \nesta with continuation seems very robust when high
accuracy is required.  The main distinguishing feature of \nesta is
that it is less sensitive to dynamic range; this means that as the
dynamic range increases, or as the noise level $\sigma$ decreases,
\nesta becomes very competitive.  For example, when the same test was
repeated with more noise ($\sigma = 1$), all the algorithms converged
faster.  In moving from $\sigma = 1$ to $\sigma = 0.1$, SPGL1 required
$90\%$ more iterations and Bregman required $20\%$ more iterations,
while \nesta with continuation required only $5\%$ more iterations.

One conclusion from these tests is that SPGL1, Bregman and \nesta
(with continuation) are the only methods dealing with approximately
sparse signals effectively.  The other methods, most of which did very
well on exactly sparse signals, take over $10,\!000$ function calls or
even do not converge in $20,\!000$ function calls; by comparison,
SPGL1, Bregman and \nesta with continuation converge in about
$2,\!000$ function calls.  It is also worth noting that Bregman is
only as good as the subproblem solver; though not reported here, using
the regular FPC (instead of FPC-BB) with Bregman leads to much worse
performance.

The algorithms which did converge all achieved a mean relative
$\ell_1$ error (using \eqref{eq:relerror} and the high accuracy FISTA
solution as the reference) less than $2\cdot 10^{-4}$ and sometimes as
low as $10^{-5}$, except SPGL1, which had a mean relative error of
$1.1\cdot 10^{-3}$.  Of the algorithms that did not converge in
$20,\!000$ function calls, FPC and FPC-BB had a mean $\ell_1$ relative
error about $5\cdot 10^{-3}$, GPSR with continuation had errors about
$5\cdot 10^{-2}$, and the rest had errors greater than $10^{-1}$.

\begin{table}
 \caption{Recovery results of an approximately sparse signal with Crit.~$1$ as a stopping rule.}

\footnotesize
\begin{center} 
 \begin{tabular*}{0.7\textwidth} {@{\extracolsep{\fill}}l||c|c|c}
  Method & $ <\mathcal{N}_A> $ & $\min \mathcal{N}_A$  & $\max \mathcal{N}_A$\\
   \hline
\nesta     & 18912 & 18773 & 19115\\
      \nesta + Ct     & 2667  & 2603  & 2713\\
      GPSR     & DNC & DNC & DNC\\
      GPSR + Ct	& DNC & DNC & DNC\\
      SpaRSA &  10019 &       8369 &      12409\\
      SPGL1     & 1776  & 1073  & 2464\\
      FISTA     & 10765 & 10239 & 11019\\
      FPC Active Set     & DNC & DNC & DNC\\
      FPC Active Set (CG)     & DNC & DNC & DNC\\
      FPC     & DNC & DNC & DNC\\
      FPC-BB     & DNC &DNC & DNC\\
      Bregman-BB     & 2045  & 2045  & 2045
\end{tabular*}
\label{table12}
\end{center}
\end{table}

\begin{table}
 \caption{Recovery results of an approximately sparse signal with Crit.~$2$ as a stopping rule.}

\footnotesize
\begin{center} 
  \begin{tabular*}{0.7\textwidth} {@{\extracolsep{\fill}}l||c|c|c}
    Method & $ <\mathcal{N}_A> $ & $\min \mathcal{N}_A$  & $\max \mathcal{N}_A$\\
    \hline
    \nesta    & 18912 & 18773 & 19115\\
    \nesta + Ct     & 2667  & 2603  & 2713\\
    GPSR     & DNC& DNC & DNC\\
    GPSR + Ct	& DNC& DNC & DNC\\
    SpaRSA	& 10021	& 8353	& 12439\\
    SPGL1     & 1776  & 1073  & 2464\\
    FISTA     & 10724 & 10197 & 10980\\
    FPC Active Set     & DNC & DNC & DNC \\
    FPC Active Set (CG)      & DNC & DNC & DNC \\
    FPC      & DNC & DNC & DNC \\
    FPC-BB     & DNC & DNC & DNC \\
    Bregman-BB     & 2045  & 2045  & 2045
\end{tabular*}
\label{table14}
\end{center}
\end{table}

\section{An all-purpose algorithm}
\label{sec:flexible}

A distinguishing feature is that \nesta is able to cope with a wide
range of standard regularizing functions. In this section, we present
two examples: nonstandard $\ell_1$ minimization and total-variation
minimization.

\subsection{Nonstandard sparse reconstruction: $\ell_1$ analysis}
\label{sec:analysis}

Suppose we have a signal $x \in \mathbb{R}^n$, which is assumed to be
approximately sparse in a transformed domain such as the wavelet, the
curvelet or the time-frequency domains. Let $W$ be the corresponding
synthesis operator whose columns are the waveforms we use to
synthesize the signal $x = W \alpha$ (real-world signals do not admit
an exactly sparse expansion); e.g.~the columns may be wavelets,
curvelets and so on, or both. We will refer to $W^*$ as the analysis
operator.  As before, we have (possibly noisy) measurements $b = Ax^0
+ z$.  The {\em synthesis} approach attempts reconstruction by solving
\begin{equation}
\begin{array}{ll}
 \text{minimize}   & \quad \|\alpha\|_{\ell_1} \\
 \text{subject to} & \quad  \|b - A W \alpha\|_{\ell_2} \le \epsilon,
\end{array}
\label{eqn:l1syn}
\end{equation}
while the {\em analysis} approach solves the related problem
\begin{equation}
\begin{array}{ll}
 \text{minimize}   & \quad \|W^* x\|_{\ell_1} \\
 \text{subject to} & \quad  \|b - A x\|_{\ell_2} \le \epsilon.
\end{array}
\label{eq:l1an}
\end{equation}

If $W$ is orthonormal, the two problems are equivalent, but in
general, these give distinct solutions and current theory explaining
the differences is still in its infancy.  The article \cite{EladPrior}
suggests that synthesis may be overly sensitive, and argues with
geometric heuristics and numerical simulations that analysis is
sometimes preferable.

Solving $\ell_1$-analysis problems with \nesta is straightforward as
only Step $1$ needs to be adapted.  We have
\[
f_\mu(x) = \max_{x \mathcal{Q}_s} \langle u , W^*x \rangle -
\frac{\mu}{2}\|u\|_{\ell_2}^2,
\]
and the gradient at $x$ is equal to 
\[
\nabla f_\mu(x) = W u_\mu(x);
\]
here, $u_\mu(x)$ is given by
\[
(u_\mu(x))[i] = \begin{cases}
\mu^{-1}  (W^*x)[i], &  \text{if }   |(W^*x)[i]| < \mu,\\
\text{sgn}((W^*x)[i]), & \text{otherwise}.
\end{cases}
\]

Steps $2$ and $3$ remain unchanged. The computational complexity of
the algorithm is then increased by an extra term, namely $2 \,
\mathcal{C}_{W}$ where $\mathcal{C}_W$ is the cost of applying $W$ or $W^*$
to a vector. In practical situations, there is often a fast algorithm
for applying $W$ and $W^*$, e.g.~a fast wavelet transform
\cite{ima:mallat98}, a fast curvelet transform \cite{curv:demanet}, a
fast short-time Fourier transform \cite{ima:mallat98} and so on, which
makes this a low-cost extra step\footnote{The ability to solve the
analysis problem also means that \nesta can easily solve reweighted
$\ell_1$ problems \cite{Candes:2008le} with no change to the code.}.

\subsection{Numerical results for nonstandard $\ell_1$ minimization}

Because \nesta is one of very few algorithms that can solve both the
analysis and synthesis problems efficiently, we tested the performance
of both analysis and synthesis on a simulated real-world signal from
the field of radar detection.  The test input is a superposition of
three signals.  The first signal, which is intended to make recovery
more difficult for any smaller signals, is a plain sinusoid with amplitude
of 1000 and frequency near 835 MHz.  

A second signal, similar to a Doppler pulse radar, is at a carrier
frequency of 2.33 GHz with maximum amplitude of 10, a pulse width of 1
$\mu s$ and a pulse repetition interval of 10 $\mu$s; the pulse
envelope is trapezoidal, with a 10 ns rise time and 40 ns fall time.
This signal is more than 40 dB lower than the pure sinusoid, since the
maximum amplitude is 100$\times$ smaller, and since the radar is
nonzero only 10\% of the time.  The Doppler pulse was chosen to be
roughly similar to a realistic weather Doppler radar.  In practice,
these systems operate at 5 cm or 10 cm wavelengths (i.e.~6 or 3 GHz)
and send out short trapezoidal pulses to measure the radial velocity
of water droplets in the atmosphere using the Doppler effect.

The third signal, which is the signal of interest, is a
frequency-hopping radar pulse with maximum amplitude of 1 (so about 20
dB beneath the Doppler signal, and more than 60 dB below the
sinusoid).  For each instance of the pulse, the frequency is chosen
uniformly at random from the range 200 MHz to 2.4 GHz.  The pulse
duration is 2 $\mu s$ and the pulse repetition interval is 22 $\mu s$,
which means that some, but not all, pulses overlap with the Doppler
radar pulses.  The rise time and fall time of the pulse envelope are
comparable to the Doppler pulse.  Frequency-hopping signals may arise
in applications because they can be more robust to interference and
because they can be harder to intercept.  When the carrier frequencies
are not known to the listener, the receiver must be designed to cover
the entire range of possible frequencies (2.2 GHz in our case).  While
some current analog-to-digital converters (ADC) may be capable of
operating at 2.2 GHz, they do so at the expense of low precision.
Hence this situation may be particularly amenable to a compressed
sensing setup by using several slower (but accurate) ADC to cover a
large bandwidth.

We consider the exact signal to be the result of an infinite-precision
ADC operating at 5 GHz, which corresponds to the Nyquist rate for
signals with 2.5 GHz of bandwidth.  Measurements are taken using an
orthogonal Hadamard transform with randomly permuted columns, and
these measurements were subsequently sub-sampled by randomly choosing
$m = .3 n$ rows of the transform (so that we undersample Nyquist by
$10/3$).  Samples are recorded for $T = 209.7 \mu$s, which corresponds
to $n = 2^{20}$.  White noise was added to the measurements to make a
60 dB signal-to-noise ratio (SNR) (note that the effective SNR for the
frequency-hopping pulse is much lower).  The frequencies of the
sinusoid and the Doppler radar were chosen such that they were not
integer multiples of the lowest recoverable frequency $f_{min} =
1/(2T)$.

For reconstruction, the signal is analyzed with a tight frame of Gabor
atoms that is approximately 5.5$\times$ overcomplete.  The particular
parameters of the frame are chosen to give reasonable reconstruction,
but were not tweaked excessively.  It is likely that differences in
performance between analysis and synthesis are heavily dependent on
the particular dictionary.

To analyze performance, we restrict our attention to the frequency
domain in order to simplify comparisons.  The top plot in
Figure~\ref{fig:radar} shows the frequency components of the original,
noiseless signal.  The frequency hopping pulse barely shows up since
the amplitude is 1000$\times$ smaller than the sinusoid and since each
frequency only occurs for 1 $\mu$s (of 210 $\mu$s total).

The bottom plots in Figure~\ref{fig:radar} show the spectrum of the
recovered signal using analysis and synthesis, respectively.  For this
test, analysis does a better job at finding the frequencies belonging
to the small pulse, while synthesis does a better job recreating the
large pulse and the pure tone.  The two reconstructions used slightly
different values of $\mu$ to account for the redundancy in the size of
the dictionary; otherwise, algorithm parameters were the same.  In the
analysis problem, \nesta took 231 calls to the analysis/synthesis
operator (and 231 calls to the Hadamard transform); for synthesis,
\nesta took 1378 calls to the analysis/synthesis operator (and 1378 to
the Hadamard transform). With \nestansp, synthesis is more
computationally expensive than analysis since no change of variables
trick can be done; in the synthesis case, $W$ and $W^*$ are used in
Step $2$ and $3$ while in the analysis case, the same operators are
used once in Step $1$ (this is accomplished by the previously
mentioned change-of-variables for partial orthogonal measurements).

As emphasized in \cite{EladPrior}, when $W$ is overcomplete, the
solution computed by solving the analysis problems is likely to be
denser than in the synthesis case. In plain English, the analysis
solution may seem ``noisier'' than the synthesis solution. But the
compactness of the solution of the synthesis problem may also be its
weakness: an error on one entry of $\alpha$ may lead to a solution
that differs a lot. This may explain why the frequency-hopping radar
pulse is harder to recover with the synthesis prior.

Because all other known first-order methods solve only the synthesis
problem, \nesta may prove to be extremely useful for real-world
applications.  Indeed, this simple test suggests that analysis may
sometimes be much preferable to synthesis, and given a signal with
$2^{20}$ samples (too large for interior point methods), we know of no
other algorithm that can return the same results.

\begin{figure}
\begin{center}
\begin{centering}
\includegraphics[scale=0.55]{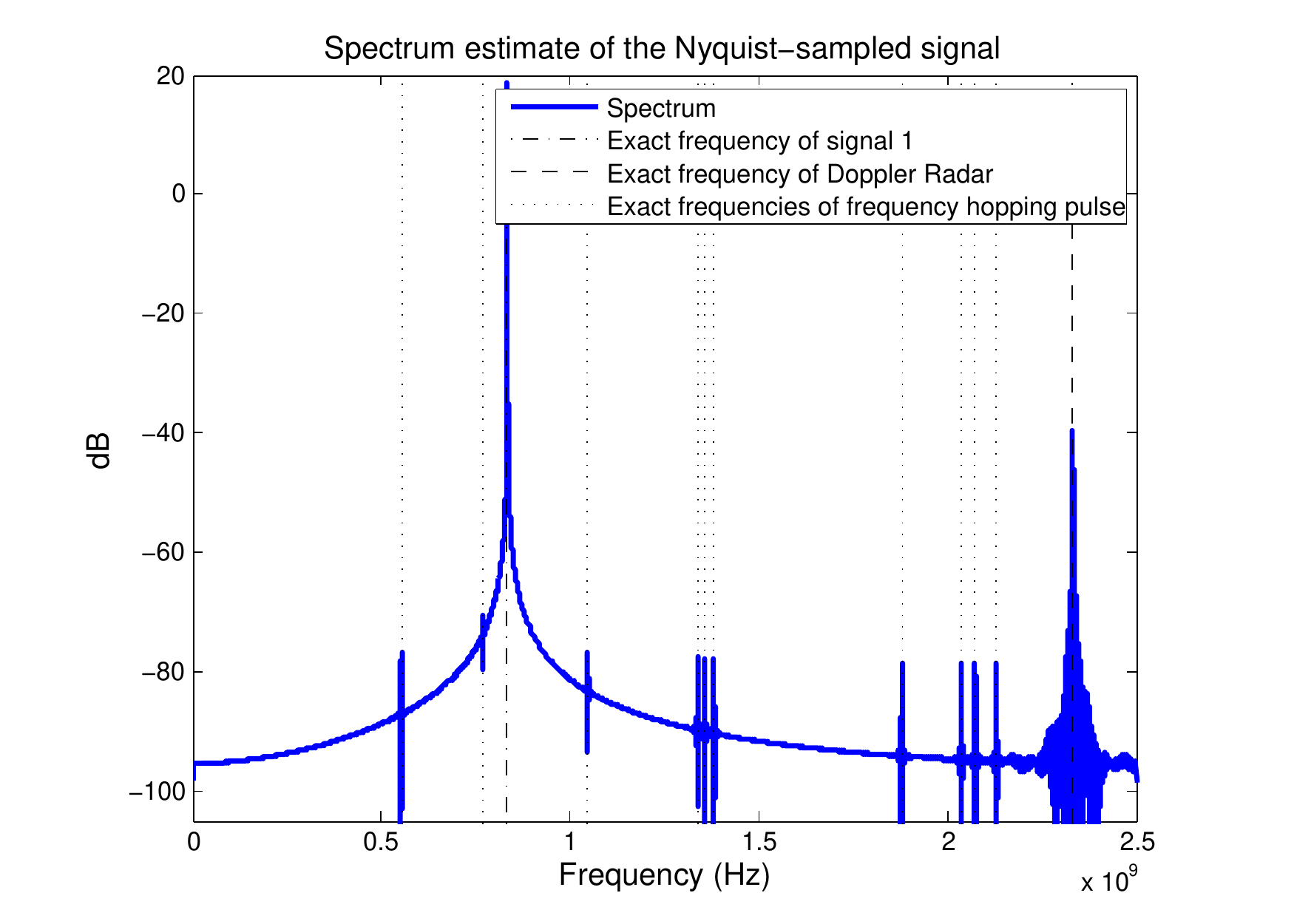}\\
\end{centering}
\vfill
\begin{centering}
\includegraphics[scale=0.55]{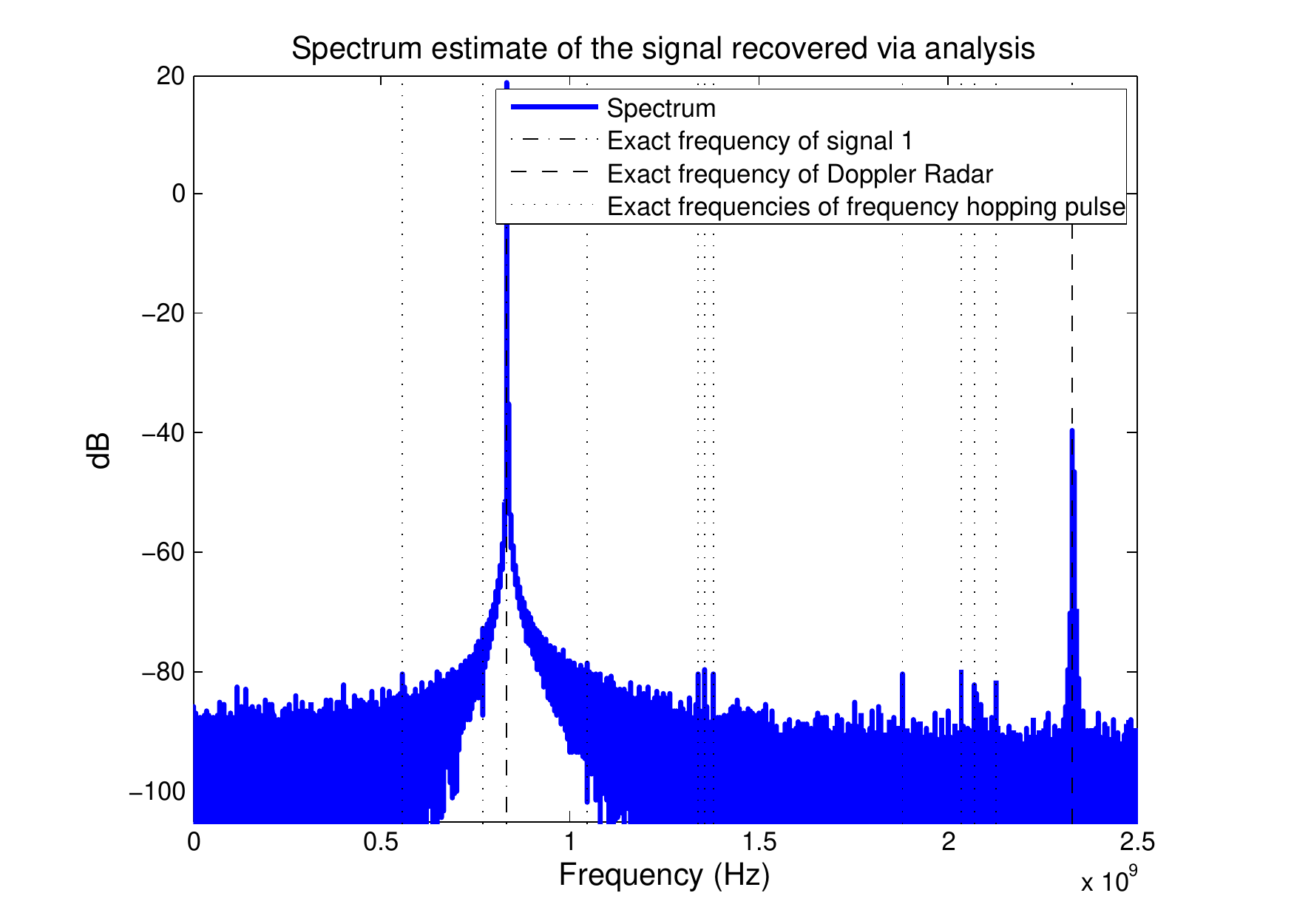}
\end{centering}
\vfill
\begin{centering}
\includegraphics[scale=0.55]{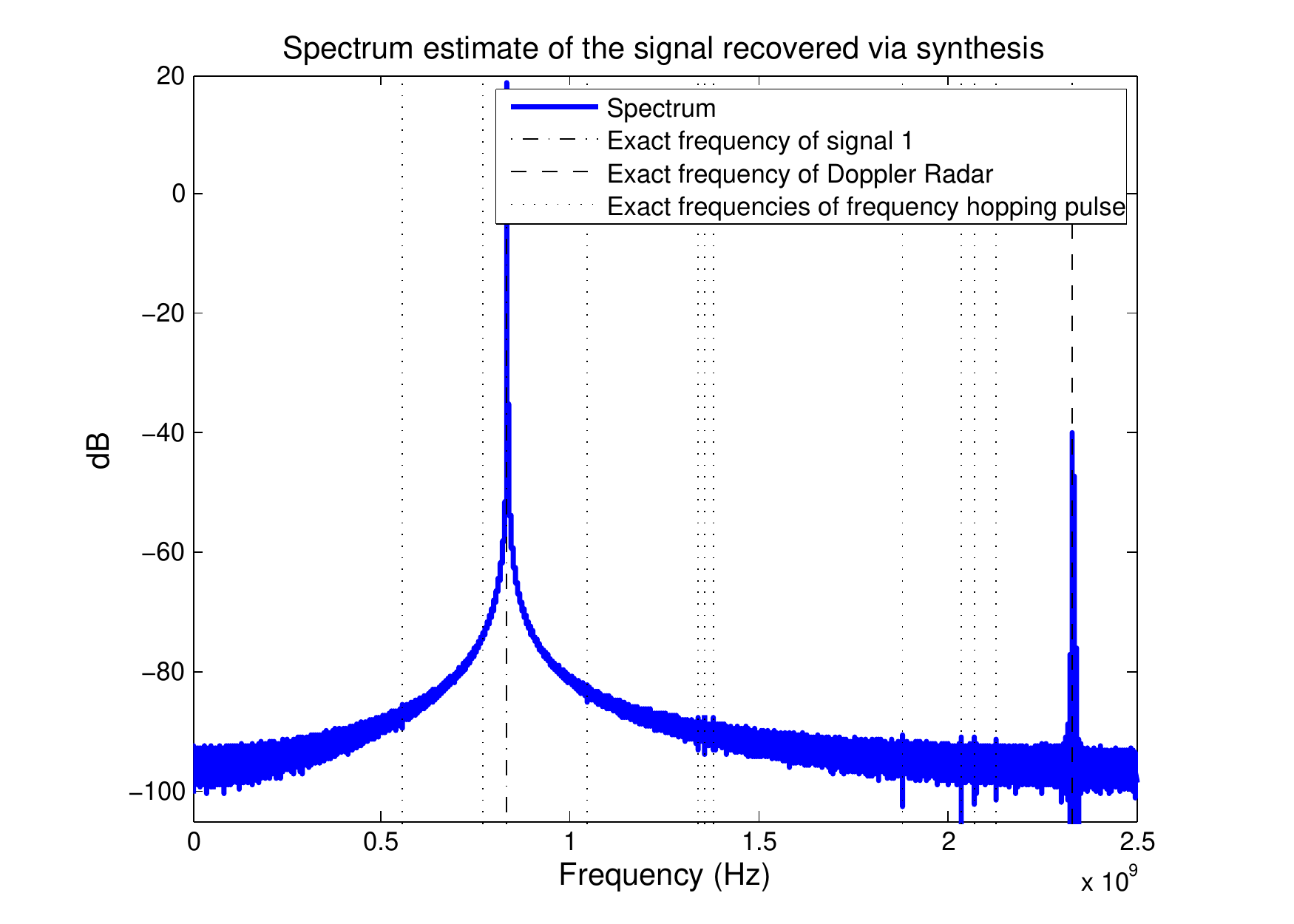}
\end{centering}

\caption{\textbf{Top:} spectrum estimate of the exact signal, no
noise.  The pure tone at 60 dB and the Doppler radar at 20 dB
dominate the 0 dB frequency hopping pulses. \textbf{Middle:}
spectrum estimate of the recovered signal using analysis prior, with
60 dB SNR.  The spectrum appears noisy, but the frequency hopping
pulses stand out. \textbf{Bottom:} spectrum estimate of the
recovered signal using synthesis prior, with 60 dB SNR.  The
spectrum appears cleaner, but the small 0 dB pulses do not appear.}
\label{fig:radar}
\end{center}
\end{figure}


\subsection{Total-variation minimization}\label{sec:tvmin}

Nesterov's framework also makes total-variation minimization
possible. The TV norm of a 2D digital object $x[i,j]$ is given by
\[
\|x\|_{TV} := \sum_{i,j} \|\nabla x[i,j]\|, \quad \nabla x [i,j]
= \begin{bmatrix} (D_1 x)[i,j]\\ (D_2 x)[i,j]\end{bmatrix},
\]
where $D_{1}$ and $D_2$ are the horizontal and vertical differences
\begin{align*}
(D_1 x)[i,j] & = x[i+1,j] - x[i,j], \\
(D_2 x)[i,j] & = x[i,j+1] - x[i,j].
\end{align*}
Now the TV norm can be expressed as follows: 
\begin{equation}\label{eq:tvnorm}
\|x\|_{TV} = \max_{u \mathcal{Q}_d} \langle u,Dx \rangle,
\end{equation}
where $u = [u_1,u_2]^* \in \mathcal{Q}_d$ if and only for each
$(i,j)$, $u^2_1[i,j] + u_2^2[i,j] \le 1$, and $D = [D_1,D_2]^*$.  The
key feature of Nesterov's work is to smooth a well-structured
nonsmooth function as follows (notice in \eqref{eq:tvnorm} the
similarity between the TV norm and the $\ell_1$ norm):
\[
\max_{u \mathcal{Q}_d} \langle u,Dx \rangle - \mu p_d(u).
\]
Choosing $p_d(u) = \frac{1}{2} \|u\|_{\ell_2}^2$ provides a reasonable
prox-function that eases the computation of $\nabla f_\mu$. Just as
before, changing the regularizing function only modifies Step $1$ of
\nestansp. Here,
\[
f_\mu(x) = \max_{u \in \mathcal{Q}_d} \langle u , D x \rangle -
\frac{\mu}{2}\|u\|_{\ell_2}^2.
\]
Then as usual,
\[
\nabla f_\mu(x) = D^* u_\mu(x), 
\]
where $u_\mu(x)$ is of the form $[u_1,u_2]^*$ and for each $a \in
\{1,2\}$, 
\[
u_a[i,j] = \begin{cases}
\mu^{-1} (D_a x)[i,j], & \text{if }    \|\nabla x [i,j]\| < \mu, \\
\|\nabla x [i,j]\|^{-1} (D_a x)[i,j], & \text{otherwise}.
\end{cases}
\]
The application of $D$ and $D^*$ leads to a negligible computational
cost (sparse matrix-vector multiplications).

\subsection{Numerical results for TV minimization}

We are interested in solving
\begin{equation}
\begin{array}{ll}
\text{minimize}   & \quad \|x\|_{TV} \\
\text{subject to} & \quad  \|b - A x\|_{\ell_2} \le \epsilon.
\end{array}
\label{eq:tvmin}
\end{equation}
To be sure, a number of efficient TV-minimization algorithms have been
proposed to solve \eqref{eq:tvmin} in the special case $A = I$
(denoising problem), see \cite{rest:chambolle04,DS:05,GO:08}.  In
comparison, only a few methods have been proposed to solve the more
general problem \eqref{eq:tvmin} even when $A$ is a projector. Known
methods include interior point methods ($\ell_1$-magic)
\cite{l1magic}, proximal-subgradient methods \cite{Twist07,CP:08},
Split-Bregman \cite{GO:08}, and the very recently introduced
RecPF\footnote{available at
\url{http://www.caam.rice.edu/~optimization/L1/RecPF/}.}
\cite{RecPF}, which operates in the special case of partial Fourier
measurements.  Roughly, proximal gradient methods approach the
solution to \eqref{eq:tvmin} by iteratively updating the current
estimate $x_k$ as follows:
\[
x_{k+1} = \text{Prox}_{TV,\gamma}\left(x_k + \alpha A^*(b - Ax_k) \right),
\]
where $\mbox{Prox}_{TV,\gamma}$ is the proximity operator of TV, see
\cite{CombettesWajs05} and references therein,
\[
\text{Prox}_{TV,\gamma}(z) = \Argmin{x} \gamma \|x\|_{TV}
+\frac{1}{2}\|x - z \|_{\ell_2}^2.
\]
Evaluating the proximity operator at $z$ is equivalent to solving a TV
denoising problem. In \cite{Twist07}, the authors advocate the use of
a side algorithm (for instance Chambolle's algorithm
\cite{rest:chambolle04}) to evaluate the proximity operator. There are
a few issues with this approach. The first is that side algorithms
depend on various parameters, and it is unclear how one should select
them in a robust fashion. The second is that these algorithms are
computationally demanding which makes them hard to apply to
large-scale problems.

To be as fair as possible, we decided to compare \nesta with
algorithms for which a code has been publicly released; this is the
case for the newest in the family, namely, RecPF (as $\ell_1$-magic is
based on an interior point method, it is hardly applicable to this
large-scale problem). Hence, we propose comparing \nesta for TV
minimization with RecPF. 

Evaluations are made by comparing the performances of \nesta (with
continuation) and RecPF on a set of images composed of random
squares. As in Section~\ref{sec:numeric}, the dynamic range of the
signals (amplitude of the squares) varies in a range from $20$ to $40$
dB.  The size of each image $x$ is $1024 \times 1024$; one of these
images is displayed in the top panel of Figure~\ref{fig:squares}. The
data $b$ are partial Fourier measurements as in \cite{CRT:cs}; the
number of measurements $m = n/10$. White Gaussian noise of standard
deviation $\sigma=0.1$ is added. The parameters of \nesta are set up
as follows:
\[
x_0 = A^*b, \quad \mu = 0.2, \quad \delta = 10^{-5}, \quad \CtSteps= 5,
\]
and the initial value of $\mu$ is
\[
\mu_0 = 0.9 \, \max_{ij} \|\nabla x_0[i,j]\|.
\]
The maximal number of iterations is set to $\mathcal{I}_{\max} =
4,\!000$. As it turns out, TV minimization from partial Fourier
measurements is of significant interest in the field of Magnetic
Resonance Imaging \cite{lustig07}.

As discussed above, RecPF has been designed to solve TV minimization
reconstruction problems from partial Fourier measurements. We set the
parameters of RecPF to their default values except for the parameter
\texttt{tol\_rel\_inn} that is set to $10^{-5}$. This choice makes
sure that this converges to a solution close enough to \nestansp's
output.  Figure~\ref{fig:squares} shows the the solution computed by
RecPF (bottom left) and \nesta (bottom
right). 

\begin{figure}
\begin{center}
\includegraphics[scale=0.25]{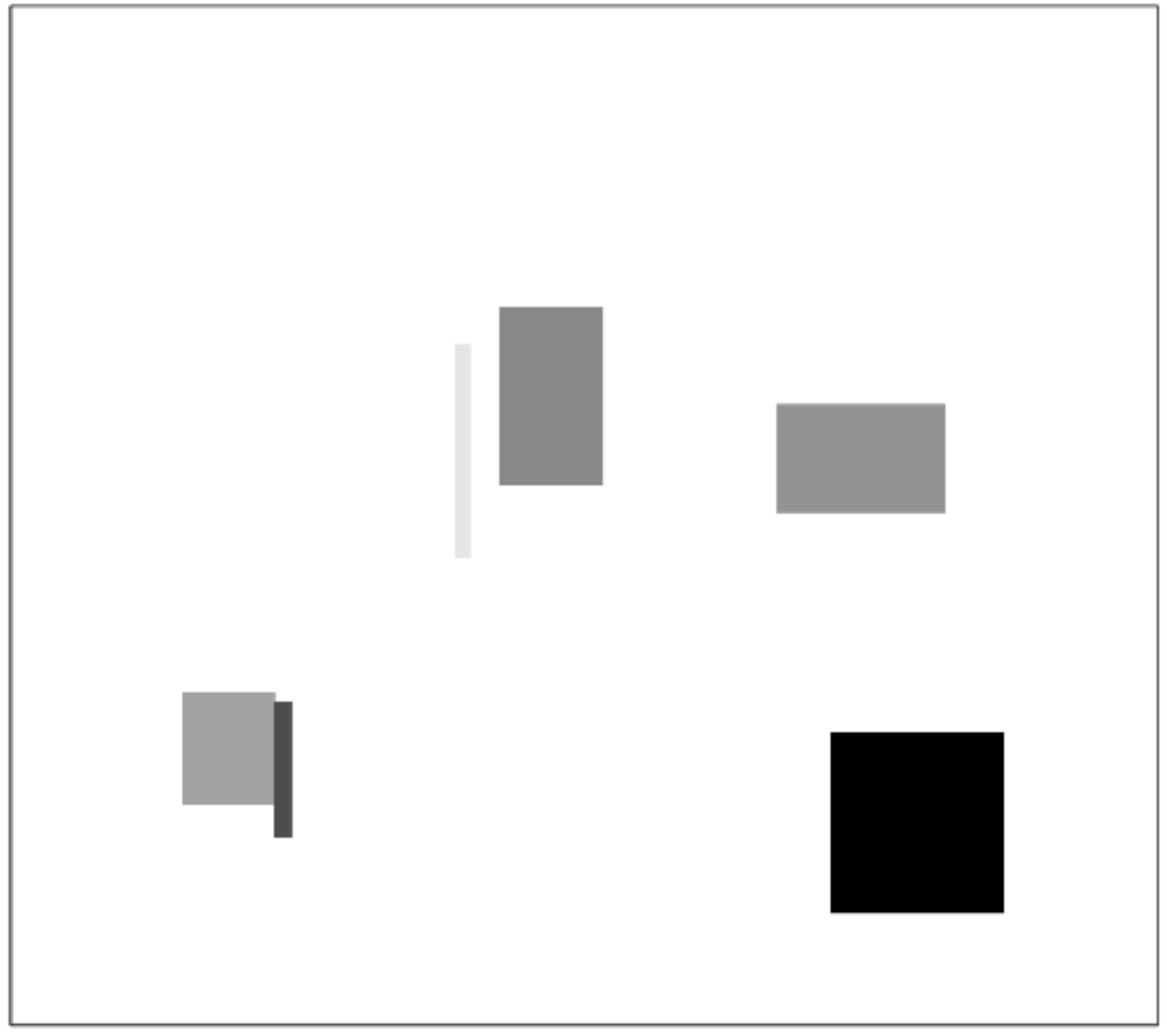}\\
\vspace{1cm}
{\includegraphics[scale=0.25]{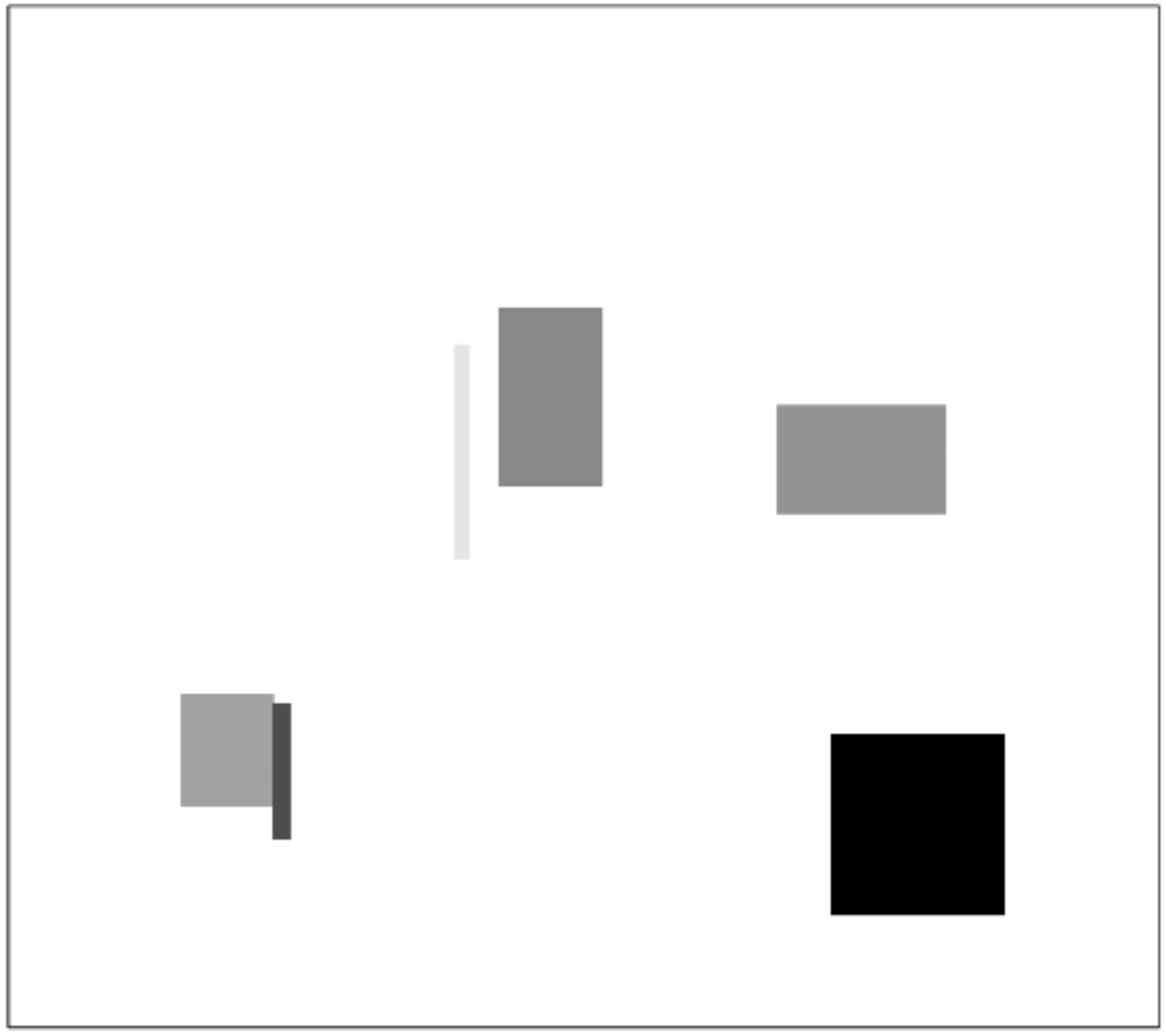}
\hspace{1cm}
\includegraphics[scale=0.25]{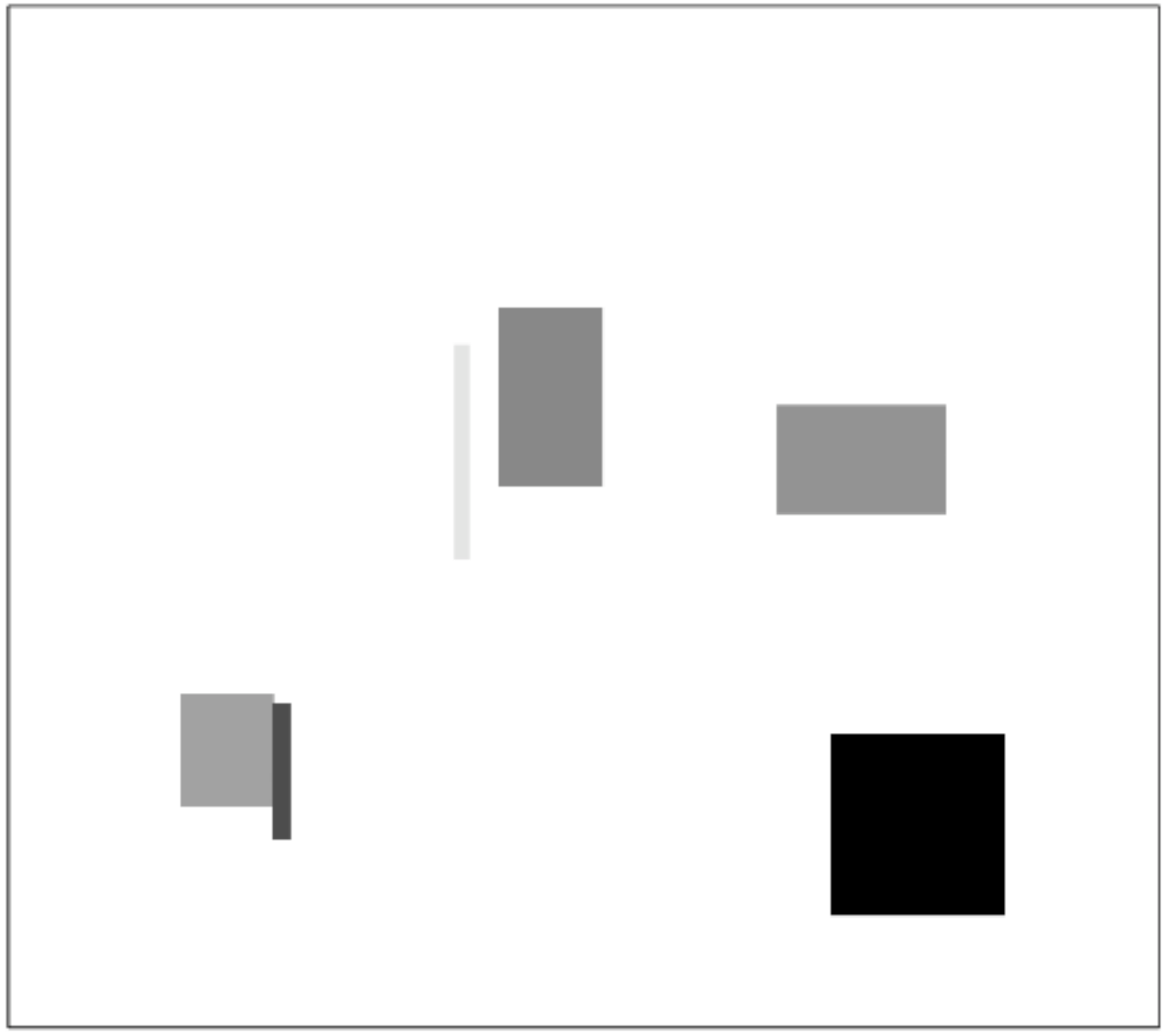}
}
\end{center}
\caption{\textbf{Top:} original image of size $1024 \times$ 1024 with
a dynamic range of about $40$ dB. \textbf{Bottom-Left:} RecPF
solution. \textbf{Bottom-Right:} \nesta solution.}
\label{fig:squares}
\end{figure}

The curves in Figure~\ref{fig:tverror} show the number of calls to $A$
or $A^*$; mid-points are averages over $5$ random trials, with error
bars indicating the minimum and maximum number of calls. Here, RecPF
is stopped when
\begin{align*}
 \|x_{\text{RecPF}}\|_{TV} & \leq 1.05 \|x_N \|_{TV},\\
 \|b - Ax_{\text{RecPF}}\|_{\ell_2} & \leq 1.05 \|b - Ax_N
 \|_{\ell_2},
\end{align*}
where $x_N$ is the solution computed via \nestansp.  As before
continuation is very efficient when the dynamic range is high
(typically higher than $40$ dB). An interesting feature is that the
numbers of calls are very similar over all five trials. When the
dynamic range increases, the computational costs of both \nesta and
RecPF naturally increase. Note that in the $60$ and $80$ dB
experiments, RecPF did not converge to the solution and this is the
reason why the number of calls saturates. While both methods have a
similar computational cost in the low-dynamic range regime, \nesta has
a clear advantage in the higher-dynamic range regime. Moreover, the
number of iterations needed to reach convergence with \nesta with
continuation is fairly low---300-400 calls to $A$ and $A^*$---and so
this algorithm is well suited to large scale problems.

\begin{figure}
\begin{center}
\includegraphics[scale=0.55]{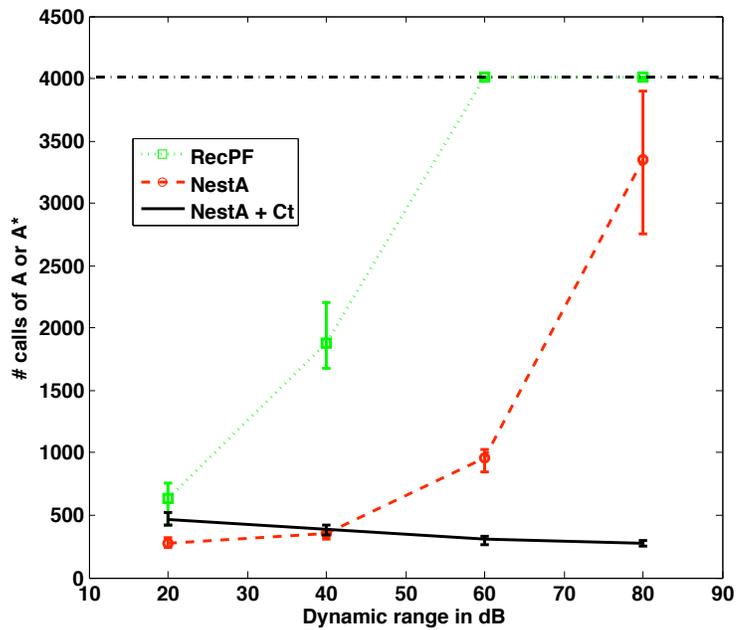}
\caption{Number of calls to $A$ and $A^*$ as a function of the
 dynamic range. \textit{Solid line:} \nesta with
 continuation. \textit{Dashed line:} \nestansp. \textit{Dotted line:}
 RecPF. \textit{Dash-dotted:} maximum number of iterations. In the
 $60$ and $80$ dB experiments, RecPF did not converge to the
 solution and this is the reason why the number of calls
 saturates.}
\label{fig:tverror}
\end{center}
\end{figure}


\section{Discussion}\label{sec:discussion}

In this paper, we have proposed an algorithm for general sparse
recovery problems, which is based on Nesterov's method.  This
algorithm is accurate and competitive with state-of-the-art
alternatives. In fact, in applications of greatest interest such as
the recovery of approximately sparse signals, it outperforms most of
the existing methods we have used in our comparisons and is comparable
to the best. Further, what is interesting here, is that we have not
attempted to optimize the algorithm in any way. For instance, we have
not optimized the parameters $\{\alpha_k\}$ and $\{\tau_k\}$, or the
number of continuation steps as a function of the desired accuracy
$\delta$, and so it is expected that finer tuning would speed up the
algorithm. Another advantage is that \nesta is extremely flexible in
the sense that minor adaptations lead to efficient algorithms for a
host of optimization problems that are crucial in the field of
signal/image processing.

\subsection{Extensions}

This paper focused on the situation in which $A^* A$ is a projector
(the rows of $A$ are orthonormal). This stems from the facts that 1)
the most computationally friendly compressed sensing are of this form,
and 2) it allows fast computations of the two sequence of iterates
$\{y_k\}$ and $\{z_k\}$. It is important, however, to extend \nesta as
to be able to cope with a wider range of problem in which $A^*A$ is
not a projection (or not diagonal).

In order to do this, observe that in Steps $2$ and $3$, we need to
solve problems of the form
\[
y_k = \Argmin{x \in \mathcal{Q}_p} \| x -q\|_{\ell_2}^2, 
\]
for some $q$, and we have seen that the solution is given by $y_k =
\mathcal{P}_{\mathcal{Q}_p}(q)$, where $\mathcal{P}_{\mathcal{Q}_p}$
is the projector onto $\mathcal{Q}_p := \{x : \|Ax - b\|_{\ell_2} \le
\epsilon\}$. The solution is given by 
\begin{equation}
\label{eq:projQ}
y_k = (I+ \lambda A^* A)^{-1}(q + \lambda A^*b)
\end{equation}
for some $\lambda \ge 0$. When the eigenvalues of $A^*A$ are well
clustered, the right-hand side of \eqref{eq:projQ} can be computed
very efficiently via a few conjugate gradients (CG) steps. Note that
this is of direct interest in compressed sensing applications in which
$A$ is a random matrix since in all the cases we are familiar with,
the eigenvalues of $A^*A$ are tightly clustered. Hence, \nesta may be
extended to general problems while retaining its efficiency, with the
proviso that a good rule for selecting $\lambda$ in \eqref{eq:projQ}
is available; i.e.~such that $\|Ay_k - b\|_{\ell_2} = \epsilon$ unless
$q \in \mathcal{Q}_p$.  Of course, one can always eliminate the
problem of finding such a $\lambda$ by solving the unconstrained
problem \QPL instead of $(\text{BP}_{\epsilon})$. In this case, each
\nesta iteration is actually very cheap, no matter how $A$ looks like.

Finally, we also observe that Nesterov's framework is likely to
provide efficient algorithms for related problems, which do not have
the special $\ell_1 + \ell_2^2$ structure. One example might be the
Dantzig selector, which is a convenient and flexible estimator for
recovering sparse signals from noisy data \cite{DS}:
\begin{equation}
\label{eq:DS} 
\begin{array}{ll}
 \text{minimize}   & \quad \|x\|_{\ell_1}\\
 \text{subject to} & \quad  \|A^*(b-AX)\|_{\ell_\infty} \le \delta. 
\end{array}
\end{equation}
This is of course equivalent to the unconstrained problem
\[
\text{minimize} \quad \lambda \|x\|_{\ell_1} +
\|A^*(b-AX)\|_{\ell_\infty}
\]
for some value of $\lambda$. Clearly, one could apply Nesterov's
smoothing techniques to smooth both terms in the objective functional
together with Nesterov's accelerated gradient techniques, and derive a
novel and efficient algorithm for computing the solution to the
Dantzig selector. This is an example among many others. Another might
be the minimization of a sum of two norms, e.g.~an $\ell_1$ and a TV
norm, under data constraints.

\subsection{Software}

In the spirit of reproducible research \cite{RS08}, a Matlab version
of \nesta will be made available at: \url{http://www.acm.caltech.edu/~nesta/}

\subsection*{Acknowledgements}

S.~Becker wishes to thank Peter Stobbe for the use of his Hadamard
Transform and Gabor frame code, and Wotao Yin for helpful discussions
about RecPF. J.~Bobin wishes to thank Hamza Fawzi for fruitful
discussions, and E.~Cand\`es would like to thank Jalal Fadili for his
suggestions. We are grateful to Stephen Wright for his comments on an
earlier version of this paper, for suggesting to use a better
version of GPSR, and encouraging us to test SpaRSA. Thanks Stephen!

\bibliographystyle{siam}
\bibliography{NESTA}

\end{document}